\long\def\comment#1{}
\newtheorem{assumption}{Assumption}
\newtheorem{remark}{Remark}
\newtheorem{lemma}{Lemma}
\newtheorem{theorem}{Theorem}
\newtheorem{example}{Example}
\begin{document}

\setlength{\arraycolsep}{0.3em}

\title{Online Abstract Dynamic Programming with Contractive Models
\thanks{}}

\author{Xiuxian Li and Lihua Xie
\thanks{X. Li is with Department of Control Science and Engineering, College of Electronics and Information Engineering, Institute for Advanced Study, and Shanghai Research Institute for Intelligent Autonomous Systems, Tongji University, Shanghai, China (e-mail: xli@tongji.edu.cn).}
\thanks{L. Xie is with School of Electrical and Electronic Engineering, Nanyang Technological University, 50 Nanyang Avenue, Singapore 639798 (e-mail: elhxie@ntu.edu.sg).}
}

\maketitle

\setcounter{equation}{0}
\setcounter{figure}{0}
\setcounter{table}{0}

\begin{abstract}
This paper addresses the abstract dynamic programming (DP) in the online scenario, where the abstract DP mapping is time-varying, instead of static. In this case, optimal costs and policies at different time instants are not the same in general, and the problem amounts to tracking time-varying optimal costs and policies, which is of interest to many practical problems. It is thus necessary to analyze the performance of classical value iteration (VI) and policy iteration (PI) algorithms in the online case. In doing so, this paper develops and provides the theoretical analysis for several online algorithms, including approximate online VI, online PI, approximate online PI, online optimistic PI, approximate online optimistic PI, and asynchronous online PI and VI algorithms. It is proved that the tracking error bounds for all algorithms critically depend upon the largest difference between any two consecutive abstract mappings. Meanwhile, examples are presented to illustrate the theoretical results.
\end{abstract}

\begin{IEEEkeywords}
Abstract dynamic programming, online algorithms, contractive mappings, value iteration, policy iteration, optimization.
\end{IEEEkeywords}

\section{Introduction}\label{s1}

Dynamic programming (DP) is a powerful tool in handling total cost sequential decision problems, which has been extensively investigated up to now and can find lots of applications in optimal control, Markovian decision problems (MDPs), stochastic shortest path problems (SSP), zero-sum dynamic game, and reinforcement learning, and so on \cite{bertsekas2018abstract,bertsekas2018proper,bertsekas2015value,yang2017hamiltonian,liu2013policy,wei2015value,heydari2014revisiting,song2014adaptive,seiffertt2008hamilton,
chang2006policy,ni2015model,bucsoniu2011approximate}. In this paper, the focus is on abstract DP, which provides a unified analysis for DP models by abstracting their substantial structures.

In general, the models for abstract DP are classified into three types. The first is the contractive models, where there exists an abstract mapping that is a contraction over a space consisting of bounded functions defined on the state space, which is first introduced in \cite{denardo1967contraction}. These models have well-behaved analytical and computational properties. The second is the semicontractive models, introduced in \cite{bertsekas2018abstract}, and in this case, the abstract mapping is no longer a contraction over the whole bounded function space. However, in this model, some policies possess a contraction-like property while others do not, and these models can have a good enough theory nearly as in the contractive models when certain conditions hold. The third is the noncontractive models \cite{bertsekas1975monotone,bertsekas1977monotone}, in which the abstract mapping is monotone, instead of contractive. It is known that pathologies emerge in the noncontractive models, leading to that it is difficult to seek effective solutions \cite{bertsekas2017regular}.

There are mainly two fundamental algorithms in abstract DP, i.e., value iteration (VI) and policy iteration (PI), based on which various algorithms have been developed, including approximate VI and PI in finite-state discounted MDP \cite{scherrer2012on}, optimistic PI (or modified PI) \cite{puterman1994markovian}, approximate optimistic PI \cite{canbolat2013approximate}, $\lambda$-PI method \cite{bertsekas1996neuro}, approximate $\lambda$-PI method \cite{thiery2010least}, asynchronous VI \cite{bertsekas1982distributed}, and asynchronous PI \cite{williams1993analysis}. The core of VI and PI is the so-called Bellman's equation, and the key point is to find a fixed point of the corresponding mapping to Bellman's equation.

To date, although there exist numerous works on abstract DP problems as discussed above, most of them are devoted to the case of stationary abstract DP mappings. Nevertheless, in practical problems one often encounters the scenarios where the abstract DP mapping is time-varying itself or caused by the environment's uncertainties, that is, the cost function is time-varying and one usually does not have enough time to perform offline calculation for completely solving the problem at each time step before it goes forward to the next time step. For instance, when tracking a moving target for an unmanned aerial vehicle (UAV), the cost for penalizing the distance between this vehicle and the target is apparently time-varying. To meet the needs of practical applications, such as in reinforcement learning, researchers in optimization, machine learning, and control communities, etc. have put their great effort on online optimization/learning, where the cost functions are time-varying and gradually revealed to the decision maker, that is, the decision maker only knows the information on cost functions at hand until now, without aware of future information. Of pertinent literature along this line are \cite{shahrampour2018distributed,dixit2019online,bliek2016online,li2018distributedon,li2021distri,yi2019distributed,yi2021distri,bernstein2018asynchronous}, to just name a few.

Motivated by the above discussions, this paper aims to study the abstract DP problems with time-varying abstract DP mappings, called {\em online (or running) abstract DP} problems in this paper. To the best of our knowledge, it is the first time to consider the online scenario for abstract DP problems. Of closely relevant work is \cite{bernstein2018asynchronous}, which investigated the fixed point seeking problem for a time-varying sequence of contractive mappings or operators. However, the results in \cite{bernstein2018asynchronous} is unavailable in the context of abstract DP since policy iteration in abstract DP is more complicated than that in \cite{bernstein2018asynchronous}. The contributions of this paper are to develop and analyze online algorithms for online abstract DP, including approximate online VI, online PI, approximate online PI, online optimistic PI, approximate online optimistic PI, and asynchronous online PI and VI algorithms. It is shown that all error bounds for optimal cost tracking are closely related to the differences between consecutive mappings $H_k$ and $H_{k+1}$ for $k\geq 0$.

This paper is organized as follows. Section \ref{s2} formulates the problem, and online PI and VI algorithms in the synchronous case are discussed in Sections \ref{s3} and \ref{s4}, respectively. The asynchronous online PI and VI algorithms are given in Section \ref{s5}, following examples in Section \ref{s6}. Finally, the conclusion is drawn in Section \ref{s7}.

\section{Problem Formulation}\label{s2}

Let $\mathbb{R}$ and $\mathbb{N}$ be the sets of real numbers and nonnegative integers, respectively. Denote by $X$ and $U$ two sets, which can be roughly viewed as the sets of ``states'' and ``controls'', respectively. Given a state $x\in X$, let $U(x)\subset U$ be a subset of $U$, denoting feasible controls at state $x$. Let $\mathcal{M}:=\{\mu: X\to U|~\mu(x)\in U(x),~\forall~x\in X\}$, representing a collection of functions. Similar to DP, a sequence $\{\mu_k\}_{k=0}^\infty$ with $\mu_k\in\mathcal{M}$ for all $k\in\mathbb{N}$ is called a {\em nonstationary policy}, and if all $\mu_k$'s are identical, that is, $\mu_k=\mu$ for some $\mu\in\mathcal{M}$ for all $k\in\mathbb{N}$, then it is called a {\em stationary policy}. To simplify the notation, any single $\mu\in\mathcal{M}$ is also referred to as a {\em policy} when $\{\mu\}$ is a stationary policy.

Denote by $\mathcal{R}(X)$ a set of real-valued functions $J: X\to \mathbb{R}$. In online abstract DP, consider a family of time-varying mappings $H_k: X\times U\times \mathcal{R}(X)\to \mathbb{R}$, where $k\in\mathbb{N}$ is interpreted as time index. The mappings $\{H_k\}_{k=0}^\infty$ are only gradually revealed: at each time $k\in\mathbb{N}$, we only know the mappings before time $k$, but without awareness of future information on $H_l$ for $l\geq k$. Given a time $k\in\mathbb{N}$ and a policy $\mu\in\mathcal{M}$, let us consider the mapping $T_{k,\mu}: \mathcal{R}(X)\to\mathcal{R}(X)$ defined as
\begin{align}
(T_{k,\mu} J)(x)=H_k(x,\mu(x),J),~~\forall~x\in X,~J\in\mathcal{R}(X)           \label{1}
\end{align}
and also consider a mapping $T_k: \mathcal{R}(X)\to\mathcal{R}(X)$ defined as
\begin{align}
(T_k J)(x)&=\inf_{u\in U(x)}H_k(x,u,J)           \nonumber\\
&=\inf_{\mu\in\mathcal{M}}(T_{k,\mu})(x),~~~\forall~x\in X,~J\in\mathcal{R}(X).        \label{2}
\end{align}

The objective of online abstract DP is to find a function $J_k^*\in\mathcal{R}(X)$ at each time $k$ such that
\begin{align}
J_k^*(x)=\inf_{u\in U(x)}H_k(x,u,J_k^*),           \label{3}
\end{align}
i.e., seeking a fixed point of $T_k$ at each time step $k\in\mathbb{N}$, which is typically called {\em Bellman's equation}. Meanwhile, it is desirable to obtain a policy $\mu_k^*\in\mathcal{M}$ such that $T_{k,\mu_k^*}J_k^*=T_kJ_k^*$. That is, $\mu_k^*$ is an {\em optimal policy} corresponding to the {\em optimal cost} $J_k^*$.

The following is an example for illustrating the above problem.
\begin{example}[Online Optimal Control]\label{e1}
Consider a deterministic discrete-time online optimal control problem, where a nonlinear control system is given as
\begin{align}
x_{k+1}=f(x_k,u_k),~~~k\in\mathbb{N}              \label{4}
\end{align}
with $x_k\in X$ and $u_k\in U$ being the state and control of the system, respectively.

At each time slot $k\in\mathbb{N}$, there is an objective or cost function $g_k(x,u)$, and the aim is to minimize the total cost incurred by a policy $\pi_k=\{\mu_k,\mu_{k+1},\ldots\}$ over an infinite number of stages with the initial state $x_k$, i.e.,
\begin{align}
\text{minimize}~~J_{\pi_k}(x_k):=\sum_{m=0}^\infty \alpha^m g_k(x_{k+m},\mu_{k+m}),         \label{5}
\end{align}
where $\alpha\in(0,1]$ is a discounted factor.

The optimal cost function is defined by
\begin{align}
J_k^*(x)=\inf_{\pi_k\in\Pi_k} J_{\pi_k}(x),~~~\forall~x\in X,           \label{6}
\end{align}
where
\begin{align}
\Pi_k:=\{\{\mu_k,\mu_{k+1},\ldots\}|~\mu_m\in\mathcal{M},~\forall~m\geq k\}.            \label{7}
\end{align}

For arbitrary policy $\pi_k=\{\mu_k,\mu_{k+1},\ldots\}$ and writing $\pi_{k+1}=\{\mu_{k+1},\mu_{k+2},\ldots\}$, one can easily rewrite $J_{\pi_k}(x)$ as
\begin{align}
J_{\pi_k}(x)=g_k(x,\mu_k)+\alpha J_{\pi_{k+1}}(f(x,\mu_k)),~~\forall~x\in X             \label{8}
\end{align}
which leads to that
\begin{align}
J_k^*(x)&=\inf_{\pi_k=\{\mu_k,\pi_{k+1}\}\in\Pi_k}\Big\{g_k(x,\mu_k)+\alpha J_{\pi_{k+1}}(f(x,\mu_k))\Big\}      \nonumber\\
&=\inf_{\mu_k\in\mathcal{M}}\Big\{g_k(x,\mu_k)+\alpha\inf_{\pi_{k+1}\in\Pi_{k+1}} J_{\pi_{k+1}}(f(x,\mu_k))\Big\}      \nonumber\\
&=\inf_{\mu_k\in\mathcal{M}}\Big\{g_k(x,\mu_k)+\alpha J_k^*(f(x,\mu_k))\Big\}.              \label{9}
\end{align}
Once defining $H_k(x,u,J)=g_k(x,u)+\alpha J(f(x,u))$, through the above equation, it is easy to see that
\begin{align}
J_k^*(x)=\inf_{u\in U(x)} H_k(x,u,J_k^*),~~\forall x\in X,            \label{10}
\end{align}
which is exactly consistent with (\ref{3}). As a result, this online optimal control problem can be viewed as an instance of online abstract DP.
\end{example}

More examples for abstract DP can be found in \cite{bertsekas2018abstract}, including stochastic Markovian decision problems, finite-state discounted Markovian decision problems, discounted semi-Markov problems, discounted zero-sum dynamic games, minimax problems, and stochastic shortest path problems, etc. It should be noted that online abstract DP will reduce to abstract DP when the mapping $H_k$ is time-invariant.

To proceed, it is necessary to introduce a new space $\mathcal{B}(X)$, composed of functions $J$ on $X$ such that $J(x)/\nu(x)$ is bounded for all $x\in X$, where $\nu: X\to\mathbb{R}$ is a function with $\nu(x)>0$ for all $x\in X$. On the space $\mathcal{B}(X)$, a {\em weighted sup-norm} is defined as
\begin{align}
\|J\|=\sup_{x\in X}\frac{|J(x)|}{\nu(x)}.                  \label{11}
\end{align}
It has been shown in Appendix B of \cite{bertsekas2018abstract} that $\mathcal{B}(X)$ is a complete normed space with respect to the weighted sup-norm.

At this moment, two important assumptions are listed below.

\begin{assumption}[Monotonicity]\label{a1}
For all $k\in\mathbb{N}$ and any $J_1,J_2\in\mathcal{R}(X)$, if $J_1\leq J_2$, then
\begin{align}
H_k(x,u,J_1)\leq H_k(x,u,J_2),~~~\forall x\in X,~u\in U(x).             \label{12}
\end{align}
\end{assumption}

\begin{assumption}[Uniform Contraction]\label{a2}
For all $k\in\mathbb{N}$, $J\in\mathcal{B}(X)$, and $\mu\in\mathcal{M}$, there holds $T_{k,\mu}J,T_k J\in\mathcal{B}(X)$. Moreover, there exists $\alpha_k\in(0,1)$ such that for all $k\in\mathbb{N}$ and $\mu\in\mathcal{M}$
\begin{align}
\|T_{k,\mu}J_1-T_{k,\mu}J_2\|\leq \alpha_k\|J_1-J_2\|,~\forall J_1,J_2\in\mathcal{B}(X)         \label{13}
\end{align}
and $\alpha:=\max_{k\in\mathbb{N}}\alpha_k\in(0,1)$.
\end{assumption}

It is noteworthy that the monotonicity assumption holds for almost all relevant DP mappings, and the weighted sup-norm contraction assumption is satisfied for a multitude of important DP models, such as discounted finite-state MDP, and undiscounted finite-state SSP with all policies being proper. More discussions can be found in \cite{bertsekas2018abstract}.

To conclude this section, the following lemma is conducive to the ensuing analysis, which can be found in \cite{bernstein2018asynchronous}.
\begin{lemma}\label{l0}
For a positive sequence $\{a_k\}$, if there exist $K<1$, $b<1$, and $0<\tau<1$ such that for all $k>K$
\begin{align}
a_k\leq b+\tau a_{k-\delta_k},         \nonumber
\end{align}
for some $\delta_k\in\{1,\ldots,K\}$, then, $\mathop{\lim\sup}_{k\to\infty}a_k\leq \frac{b}{1-\tau}$.
\end{lemma}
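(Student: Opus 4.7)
My plan is to prove the bound by iterating the recursion $a_k\leq b+\tau a_{k-\delta_k}$ and exploiting that the delays $\delta_k$ are uniformly bounded by $K$, so that unrolling the inequality many times forces the ``history index'' to either descend into the initial window $\{1,\dots,K\}$ or to remain arbitrarily far below $k$. Both cases give a geometric series in $\tau$ multiplying $b$, plus a residual term that vanishes as the depth of unrolling grows.

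More concretely, first I would fix some large $k$ and apply the hypothesis once to obtain $a_k\leq b+\tau a_{k-\delta_k^{(1)}}$, where $\delta_k^{(1)}:=\delta_k\in\{1,\dots,K\}$. Provided $k-\delta_k^{(1)}>K$, the hypothesis applies again, yielding
\begin{equation*}
a_k\leq b(1+\tau)+\tau^2 a_{k-\delta_k^{(1)}-\delta_k^{(2)}}.
\end{equation*}
Iterating this $n$ times as long as the running index stays above $K$ gives
\begin{equation*}
a_k\leq b\sum_{i=0}^{n-1}\tau^i+\tau^n a_{k-s_n},
\qquad s_n:=\sum_{i=1}^n \delta_k^{(i)}\leq nK.
\end{equation*}
Since each $\delta_k^{(i)}\geq 1$, one can always continue unrolling while $k-s_n>K$, and at most $n\leq k$ steps are possible before the index falls into $\{1,\dots,K\}$; at that stopping point I bound the residual by $A:=\max_{1\leq j\leq K}a_j$, which is a finite constant determined by the initial data.

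Thus for every sufficiently large $k$ I can choose the number of unrolling steps $n(k)$ so that $n(k)\to\infty$ as $k\to\infty$ (e.g.\ $n(k)=\lfloor(k-K)/K\rfloor$), which yields
\begin{equation*}
a_k\leq \frac{b(1-\tau^{n(k)})}{1-\tau}+\tau^{n(k)}A.
\end{equation*}
Taking $\limsup_{k\to\infty}$ on both sides makes the residual $\tau^{n(k)}A\to 0$ and the partial geometric sum tend to $b/(1-\tau)$, giving the desired $\limsup_{k\to\infty}a_k\leq b/(1-\tau)$.

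The main obstacle, and the only slightly subtle point, is the bookkeeping around the stopping rule for the unrolling: one must simultaneously guarantee that the recursion is applied only at indices exceeding $K$ (so the hypothesis is valid), and that the number of iterations $n(k)$ grows unboundedly with $k$ (so the term $\tau^{n(k)}A$ is pushed to zero). Beyond this indexing care, the argument is a direct telescoping of a contractive recursion, and no extra structure of the sequence $\{a_k\}$ is required.
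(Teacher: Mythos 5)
You should note at the outset that the paper itself does not prove Lemma~\ref{l0}; it is quoted from \cite{bernstein2018asynchronous}, so there is no internal proof to compare against. Your unrolling argument (reading $K$ as a positive integer bound on the delays and $b\geq 0$; the ``$K<1$, $b<1$'' in the statement are evidently typos) is the natural self-contained route: telescoping the contractive recursion, a geometric series in $\tau$ multiplying $b$, and a residual killed by $\tau^{n(k)}$ with $n(k)\to\infty$. That skeleton is sound.

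The one step that does not hold as written is bounding the residual by $A:=\max_{1\leq j\leq K}a_j$ after exactly $n(k)=\lfloor (k-K)/K\rfloor$ unrolling steps. With that choice you only know $k-s_{n(k)}\geq k-n(k)K\geq K$; if the delays are small (say $\delta_k\equiv 1$) the index $k-s_{n(k)}$ sits far above $K$, where $a$ is not a priori controlled by $A$, so the displayed bound $a_k\leq b(1-\tau^{n(k)})/(1-\tau)+\tau^{n(k)}A$ is not justified. Two immediate fixes: (i) unroll to the actual stopping time $N(k)$, the first $n$ with $k-s_n\leq K$; then the residual index lies in $\{1,\ldots,K\}$ (it equals $k-s_{N(k)-1}-\delta^{(N(k))}\geq K+1-K=1$), so the residual is $\leq A$, and since each $\delta\leq K$ forces $N(k)\geq (k-K)/K\to\infty$ you get $a_k\leq b/(1-\tau)+\tau^{N(k)}A\leq b/(1-\tau)+\tau^{\lfloor (k-K)/K\rfloor}A$, and the limsup bound follows; or (ii) first prove by induction that $a_j\leq A+b/(1-\tau)$ for all $j$ (the recursion propagates this uniform bound), after which stopping at exactly $n(k)$ steps is legitimate with $A$ replaced by that constant. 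Either patch completes your proof; and once boundedness is in hand there is an even shorter finish, namely setting $L=\limsup_{k\to\infty}a_k<\infty$, noting $k-\delta_k\geq k-K\to\infty$ so that $\limsup_{k\to\infty}a_{k-\delta_k}\leq L$, and passing to the limsup in the recursion to get $L\leq b+\tau L$, i.e., $L\leq b/(1-\tau)$.
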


\section{Synchronous Online Value Iteration}\label{s3}

This section is devoted to online VI algorithms' development and analysis in the synchronous setting.

As seen from (\ref{3}), the goal is to find the fixed point of $T_k$ at each time slot $k\in\mathbb{N}$. To this end, an {\em approximate VI} is proposed as
\begin{align}
J_{k+1}&=\tilde{T}_k^{m_k} J_k           \label{14}
\end{align}
with any initial condition $J_0\in\mathcal{B}(X)$, where $\tilde{T}_k J_k$ stands for an approximation of $T_k J_k$, satisfying
\begin{align}
\|\tilde{T}_k^{m_k}J-T_k^{m_k}J\|\leq e_k,~~\forall J\in\mathcal{B}(X)          \label{15}
\end{align}
with $e:=\max_{k\in\mathbb{N}}e_k<\infty$, and $m_k\geq 1$ is an integer, representing the computational power at time step $k\in\mathbb{N}$. For this online problem, it is of necessity to impose a condition on the switching rate of consecutive optimal costs, that is, there exists a constant $\rho_k\geq 0$ for each $k\in\mathbb{N}$ such that
\begin{align}
\|J_k^*-J_{k+1}^*\|\leq \rho_k,            \label{16}
\end{align}
and $\rho:=\max_{k\in\mathbb{N}}\rho_k<\infty$.

It is now ready to present the tacking error bound for the approximate VI (\ref{14}).
\begin{theorem}\label{t1}
Under Assumption \ref{a2}, there holds for $J_k$ generated by approximate VI (\ref{14}) that
\begin{align}
\|J_k-J_k^*\|\leq \alpha^{\sum_{s=0}^{k-1}m_s}\|J_0-J_0^*\|+\frac{\rho+e}{1-\alpha^{m_d}},         \label{17}
\end{align}
where $m_d:=\min_{k\in\mathbb{N}}m_k \geq 1$.
\end{theorem}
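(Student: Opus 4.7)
The plan is to establish a linear one-step recursion for the tracking error $a_k := \|J_k-J_k^*\|$ and then unroll it geometrically. First I would exploit that $J_k^*$ is a fixed point of $T_k$, so $T_k^{m_k} J_k^* = J_k^*$, and use the triangle inequality on
\begin{align*}
J_{k+1} - J_{k+1}^* &= \bigl(\tilde{T}_k^{m_k} J_k - T_k^{m_k} J_k\bigr) + \bigl(T_k^{m_k} J_k - T_k^{m_k} J_k^*\bigr) \\
&\quad + \bigl(J_k^* - J_{k+1}^*\bigr).
\end{align*}
The first term is bounded in weighted sup-norm by $e_k \le e$ via (\ref{15}), and the third by $\rho_k \le \rho$ via (\ref{16}).

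For the middle term I need contraction of $T_k$, not only of $T_{k,\mu}$. A standard inf-interchange argument in weighted sup-norm promotes the $\alpha_k$-contraction of $T_{k,\mu}$ in Assumption \ref{a2} to $\|T_k J_1 - T_k J_2\| \le \alpha_k \|J_1 - J_2\|$, and composing $m_k$ times yields effective modulus $\alpha_k^{m_k} \le \alpha^{m_k}$. Combining the three pieces produces the recursion
\begin{align*}
a_{k+1} \le \alpha^{m_k} a_k + e + \rho, \qquad k \ge 0.
\end{align*}

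Finally I would unroll this recursion from $k=0$, obtaining
\begin{align*}
a_k \le \alpha^{\sum_{s=0}^{k-1} m_s}\, a_0 + (e+\rho) \sum_{j=0}^{k-1} \alpha^{\sum_{s=j+1}^{k-1} m_s}.
\end{align*}
Using $m_s \ge m_d$ in the exponents of the trailing sum bounds each term by $(\alpha^{m_d})^{k-1-j}$, reducing it to a geometric series majorized by $1/(1-\alpha^{m_d})$, which gives exactly (\ref{17}). The only step requiring any genuine care is promoting the contraction from $T_{k,\mu}$ to $T_k$; the rest is a routine linear unrolling plus a geometric bound, so I do not expect further obstacles.
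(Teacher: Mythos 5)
Your proposal is correct and follows essentially the same route as the paper: the identical three-term decomposition (the paper's middle term $\|T_k^{m_k}J_k-J_k^*\|$ is precisely $\|T_k^{m_k}J_k-T_k^{m_k}J_k^*\|$ by the fixed-point property), the bounds $e_k$ and $\rho_k$ from (\ref{15})--(\ref{16}), the contraction modulus $\alpha_k^{m_k}\le\alpha^{m_k}$, and the same geometric unrolling using $m_s\ge m_d$. The one point you flag --- promoting the contraction from $T_{k,\mu}$ to $T_k$ --- is indeed the only nontrivial ingredient, and it is the standard consequence of the uniform modulus in Assumption \ref{a2} that the paper also invokes implicitly.
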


\begin{proof}
In view of (\ref{14}), it can be obtained that
\begin{align}
\|J_{k+1}-J_{k+1}^*\|&=\|\tilde{T}_k^{m_k}J_k-J_{k+1}^*\|          \nonumber\\
&\leq \|\tilde{T}_k^{m_k}J_k-T_k^{m_k}J_k\|+\|T_k^{m_k}J_k-J_k^*\|         \nonumber\\
&\hspace{0.4cm}+\|J_k^*-J_{k+1}^*\|          \nonumber\\
&\leq \alpha_k^{m_k}\|J_k-J_k^*\|+e_k+\rho_k,                \nonumber
\end{align}
where the second inequality has employed Assumption \ref{a2} and (\ref{15})-(\ref{16}). By recursion, one has that
\begin{align}
\|J_{k+1}-J_{k+1}^*\|&\leq \prod_{s=0}^k\alpha_s^{m_s}\|J_0-J_0^*\|+\sum_{s=0}^k \alpha_{k:s}(\rho_s+e_s)          \nonumber\\
&\hspace{-0.8cm}\leq \alpha^{\sum_{s=0}^k m_s}\|J_0-J_0^*\|+(\rho+e)\sum_{s=0}^k\alpha^{\sum_{r=s+1}^k m_r}          \nonumber\\
&\hspace{-0.8cm}\leq \alpha^{\sum_{s=0}^k m_s}\|J_0-J_0^*\|+\frac{\rho+e}{1-\alpha^{m_d}},                \nonumber
\end{align}
where $\alpha_{k:s}:=\prod_{r=s+1}^k\alpha_r^{m_r}$ when $t=0,1,\ldots,k-1$, and $\alpha_{k:s}:=1$ when $s=k$. This ends the proof.
\end{proof}

\begin{remark}\label{r1}
It is worth mentioning that a similar online algorithm for finding fixed points of time-varying mappings is addressed in \cite{bernstein2018asynchronous}, which is a special case of Theorem \ref{t1} with $m_k=1$ for all $k\in\mathbb{N}$. It can be observed from (\ref{17}) that $J_k$ will approach to $J_k^*$ with an error bound $(\rho+e)/(1-\alpha^{m_d})$ at an exponential rate as $k$ tends to infinity.
\end{remark}

\section{Synchronous Online Policy Iteration}\label{s4}

This section is concerned with the online PI algorithms in the synchronous setup, including exact/approximate online PI and optimistic PI algorithms.

\subsection{Online Policy Iteration}

First, let us consider the exact online PI for solving online abstract DP, for which, given the current policy $\mu_k$ with an initial policy $\mu_0$, the policy update $\mu_{k+1}$ at time step $k+1$ is given as
\begin{subequations}\label{18}
\begin{align}
J_{k,\mu_k}&=T_{k,\mu_k}J_{k,\mu_k},\text{\em (Online policy evaluation)}                  \label{18a}\\
\hspace{-0.19cm}T_{k,\mu_{k+1}}J_{k,\mu_k}&=T_k J_{k,\mu_k},\text{\em (Online policy improvement).}         \label{18b}
\end{align}
\end{subequations}
It is assumed that one can attain the minimum of $H_k(x,u,J_{k,\mu_k})$ over $u\in U(x)$ for all $x\in X$, such that the update $\mu_{k+1}$ at online policy improvement is well defined, and this assumption is always exploited for PI algorithms in this paper. The purpose of online policy evaluation (\ref{18a}) is to calculate $J_{k,\mu_k}$, i.e., to find the fixed point of $T_{k,\mu_k}$, and (\ref{18b}) is leveraged to obtain $\mu_{k+1}$.

To move forward, it is imperative to introduce the following bounds for the online abstract DP:
\begin{align}
\|J_{k,\mu}-J_{k+1,\mu}\|&\leq \gamma_{1,k},~~\forall \mu\in U       \nonumber\\
\|J_k^*-J_{k+1}^*\|&\leq \gamma_{2,k},~~\forall k\in\mathbb{N}       \label{19}
\end{align}
where $J_{k,\mu}$ is the fixed point of $T_{k,\mu}$ for any $k\in\mathbb{N}$ and $\mu\in \mathcal{M}$, the first inequality indicates to what extent $H_k$ is different from $H_{k+1}$ in the case of the same input, and the second one connotes the switching bound on consecutive optimal costs.

With the above preparations, the main result on online VI (\ref{18}) is given as follows.

\begin{theorem}\label{t2}
Under Assumptions \ref{a1} and \ref{a2}, there holds for online VI (\ref{18}) that for all $k\in\mathbb{N}$
\begin{align}
\|J_{k,\mu_k}-J_k^*\|\leq \alpha^k\|J_{0,\mu_0}-J_0^*\|+\frac{\gamma_1+\gamma_2}{1-\alpha},          \label{20}
\end{align}
where $\gamma_l:=\max_{k\in\mathbb{N}}\gamma_{l,k}$ for $l=1,2$.
\end{theorem}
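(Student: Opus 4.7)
The plan is to reduce the tracking bound to a one-step recursive inequality of the form
\[
\|J_{k+1,\mu_{k+1}}-J_{k+1}^{*}\|\leq \alpha\,\|J_{k,\mu_k}-J_k^{*}\|+\gamma_1+\gamma_2,
\]
and then unroll from $k=0$, using the geometric sum $\sum_{i=0}^{k-1}\alpha^i\leq 1/(1-\alpha)$ to obtain (\ref{20}). The right-hand side decomposes naturally into a ``static'' PI contraction performed at time $k$ plus two perturbation terms that appear because both the policy-evaluation mapping and the optimal cost drift between time $k$ and time $k+1$.

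The crux is the single-time contraction
\[
\|J_{k,\mu_{k+1}}-J_k^{*}\|\leq \alpha\,\|J_{k,\mu_k}-J_k^{*}\|, \qquad (\star)
\]
which I would establish by a standard sandwich argument. First, policy improvement (\ref{18b}) yields $T_{k,\mu_{k+1}}J_{k,\mu_k}=T_k J_{k,\mu_k}\leq T_{k,\mu_k}J_{k,\mu_k}=J_{k,\mu_k}$; iterating the monotone contraction $T_{k,\mu_{k+1}}$ (Assumptions \ref{a1} and \ref{a2}) then produces a nonincreasing sequence converging to the fixed point $J_{k,\mu_{k+1}}$, so $J_{k,\mu_{k+1}}\leq T_k J_{k,\mu_k}$. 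A symmetric monotonicity argument applied to $T_k$, using $T_k J_{k,\mu_{k+1}}\leq T_{k,\mu_{k+1}}J_{k,\mu_{k+1}}=J_{k,\mu_{k+1}}$, gives $J_k^{*}\leq J_{k,\mu_{k+1}}$. This sandwiches $J_{k,\mu_{k+1}}$ between $J_k^{*}$ and $T_k J_{k,\mu_k}$; subtracting $J_k^{*}=T_k J_k^{*}$ and using the fact that $T_k$ inherits the $\alpha$-contraction from Assumption \ref{a2} (by taking the infimum in (\ref{2})) yields $(\star)$ in weighted sup-norm.

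With $(\star)$ in hand, the triangle inequality
\[
\|J_{k+1,\mu_{k+1}}-J_{k+1}^{*}\|\leq \|J_{k+1,\mu_{k+1}}-J_{k,\mu_{k+1}}\|+\|J_{k,\mu_{k+1}}-J_k^{*}\|+\|J_k^{*}-J_{k+1}^{*}\|
\]
bounds the first term by $\gamma_{1,k}\leq\gamma_1$ via (\ref{19}) with $\mu=\mu_{k+1}$, the third by $\gamma_{2,k}\leq\gamma_2$, and the middle via $(\star)$. This produces the announced one-step recursion, and a straightforward induction on $k$ closes the argument.

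The main obstacle I anticipate is the clean derivation of $(\star)$: a naive triangle inequality that does not exploit the monotone sandwich structure gives a worse constant such as $2\alpha/(1-\alpha)$ in place of $\alpha$, which would destroy the geometric decay governing (\ref{20}). The careful use of Assumption \ref{a1} both to pin $J_{k,\mu_{k+1}}$ above $J_k^{*}$ and to bound it from above by $T_k J_{k,\mu_k}$ is therefore the essential ingredient, and once this is in place the remainder is routine.
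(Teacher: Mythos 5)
Your proposal is correct and follows essentially the same route as the paper's proof: policy improvement plus monotonicity to get $J_{k,\mu_{k+1}}\leq T_k J_{k,\mu_k}$, the lower bound $J_k^*\leq J_{k,\mu_{k+1}}$ (the paper cites $J_k^*=\inf_{\mu}J_{k,\mu}$ where you re-derive it by monotone iteration of $T_k$, a cosmetic difference), the contraction of $T_k$ to obtain the one-step inequality $(\star)$, and the triangle inequality with (\ref{19}) followed by unrolling the recursion.
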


\begin{proof}
Invoking (\ref{18}) and the definition of $T_k$, it can be concluded that
\begin{align}
T_{k,\mu_{k+1}}J_{k,\mu_k}=T_kJ_{k,\mu_k}\leq T_{k,\mu_k}J_{k,\mu_k}=J_{k,\mu_k},           \nonumber
\end{align}
which, together with Assumption \ref{a1}, follows that
\begin{align}
T_{k,\mu_{k+1}}^2J_{k,\mu_k}\leq T_{k,\mu_{k+1}}J_{k,\mu_k}=T_kJ_{k,\mu_k}\leq J_{k,\mu_k}.           \nonumber
\end{align}
Performing the above operation iteratively, one can obtain that
\begin{align}
T_{k,\mu_{k+1}}^mJ_{k,\mu_k}\leq T_{k}J_{k,\mu_k},~~\forall m\geq 1.           \nonumber
\end{align}
By letting $m\to\infty$, it results in
\begin{align}
J_{k,\mu_{k+1}}\leq T_{k}J_{k,\mu_k},           \nonumber
\end{align}
which yields by Assumption \ref{a2} that for all $x\in X$
\begin{align}
J_{k,\mu_{k+1}}(x)-J_k^*(x)&\leq T_{k}J_{k,\mu_k}(x)-J_k^*(x)         \nonumber\\
&\leq\alpha_k\|J_{k,\mu_k}-J_k^*\|\nu(x).           \label{pf1}
\end{align}

It is known that $J_k^*(x)=\inf_{\mu\in\mathcal{M}}J_{k,\mu}(x)$ for all $x\in X$ and $k\in\mathbb{N}$ by Proposition 2.1.2 in \cite{bertsekas2018abstract}, and $\alpha_k\leq \alpha$. Therefore, one has by (\ref{pf1}) that $J_{k,\mu_{k+1}}(x)\geq J_k^*(x)$ and
\begin{align}
\|J_{k,\mu_{k+1}}-J_k^*\|\leq \alpha \|J_{k,\mu_k}-J_k^*\|,           \nonumber
\end{align}
which in combination with (\ref{19}) leads to that
\begin{align}
\|J_{k+1,\mu_{k+1}}-J_{k+1}^*\|&\leq \|J_{k,\mu_{k+1}}-J_k^*\|+\|J_k^*-J_{k+1}^*\|         \nonumber\\
&\hspace{0.4cm}+\|J_{k+1,\mu_{k+1}}-J_{k,\mu_{k+1}}\|           \nonumber\\
&\hspace{-0.3cm}\leq\alpha\|J_{k,\mu_k}-J_k^*\|+\|J_k^*-J_{k+1}^*\|+\gamma_{1,k},            \nonumber
\end{align}
further implying (\ref{20}) by recursive iterations. This completes the proof.
\end{proof}

\begin{remark}\label{r2}
Note that unlike the case where $H_k$'s are time-invariant, it generally cannot guarantee the convergence of $\{\mu_k\}_{k=0}^\infty$ generated by online VI (\ref{18}) under arbitrary compactness
and continuity conditions, since there exists an error term $(\gamma_1+\gamma_2)/(1-\alpha)$ in the online case.
\end{remark}

\subsection{Approximate Online Policy Iteration}

In this subsection, let us consider the online policy iteration through approximations, called {\em approximate online policy iteration}, which generates a sequence of approximate cost functions $\{J_k\}$ and policies $\{\mu_k\}$ satisfying that for all $k\in\mathbb{N}$
\begin{align}
\|J_k-J_{k,\mu_k}\|\leq \delta_{1,k},~~\|T_{k,\mu_{k+1}}J_k-T_kJ_k\|\leq \epsilon_{1,k},          \label{21}
\end{align}
where $\delta_{1,k},\epsilon_{1,k}\geq 0$ are some constants. Then the following result can be obtained.

\begin{theorem}\label{t3}
Under Assumptions \ref{a1} and \ref{a2}, the sequences $\{\mu_k\}$ generated by approximate online PI (\ref{21}) satisfy
\begin{align}
\|J_{k,\mu_k}-J_k^*\|\leq \alpha^k\|J_{0,\mu_0}-J_0^*\|+\frac{r_1}{1-\alpha},             \label{22}
\end{align}
where $r_1:=\gamma_1+\gamma_2+(\epsilon_1+2\alpha\delta_1)/(1-\alpha)$ with $\epsilon_1:=\max_{k\in\mathbb{N}}\epsilon_{1,k}$, $\delta_1:=\max_{k\in\mathbb{N}}\delta_{1,k}$, and $\gamma_1,\gamma_2$ are defined in Theorem \ref{t2}.
\end{theorem}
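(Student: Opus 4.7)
The plan is to mirror the structure of the proof of Theorem~\ref{t2}: first establish a single-step, within-time-$k$ bound of the form $\|J_{k,\mu_{k+1}} - J_k^*\| \leq \alpha\|J_{k,\mu_k}-J_k^*\| + c_k$, where the additive constant $c_k$ captures the two approximation tolerances $\delta_{1,k}$ and $\epsilon_{1,k}$; then use the drift bounds~(\ref{19}) to move from time $k$ to time $k+1$; and finally iterate the resulting linear recursion and bound the geometric tail by $1/(1-\alpha)$.

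For the single-step bound I would adapt the approximate policy-iteration estimate (of the type used for Proposition~2.4.1 in~\cite{bertsekas2018abstract}) to the present time-varying notation. The key intermediate facts are that the policy-improvement step only satisfies the Bellman-optimality inequality up to the error $\epsilon_{1,k}$, namely $T_{k,\mu_{k+1}}J_k \leq T_k J_k + \epsilon_{1,k}\nu$, while the evaluation error obeys $|J_k - J_{k,\mu_k}| \leq \delta_{1,k}\nu$ pointwise. Combining these with Assumption~\ref{a1} (so that such approximate inequalities propagate through iterated applications of $T_{k,\mu_{k+1}}$) and Assumption~\ref{a2} (so that the errors contract geometrically when one passes to the fixed point via $J_{k,\mu_{k+1}} = \lim_{m\to\infty} T_{k,\mu_{k+1}}^m J_k$), a geometric-series summation yields
\begin{align}
\|J_{k,\mu_{k+1}} - J_k^*\| \leq \alpha\|J_{k,\mu_k}-J_k^*\| + \frac{\epsilon_{1,k}+2\alpha\delta_{1,k}}{1-\alpha}. \nonumber
\end{align}

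The transition from time $k$ to time $k+1$ then follows by the triangle inequality exactly as in Theorem~\ref{t2}:
\begin{align}
\|J_{k+1,\mu_{k+1}} - J_{k+1}^*\| &\leq \|J_{k,\mu_{k+1}} - J_k^*\| \nonumber\\
&\quad + \|J_k^* - J_{k+1}^*\| \nonumber\\
&\quad + \|J_{k,\mu_{k+1}} - J_{k+1,\mu_{k+1}}\| \nonumber\\
&\leq \alpha\|J_{k,\mu_k} - J_k^*\| + r_1, \nonumber
\end{align}
after invoking~(\ref{19}) and taking maxima over $k$. A routine induction on $k$ together with $\sum_{s=0}^{k-1}\alpha^s \leq 1/(1-\alpha)$ then gives~(\ref{22}).

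The main obstacle is not the outer recursion, which essentially copies Theorem~\ref{t2}, but the within-time-$k$ one-step bound, because the two tolerances enter at different places in the argument: $\delta_{1,k}$ measures the gap between $J_k$ and the fixed point $J_{k,\mu_k}$, while $\epsilon_{1,k}$ measures a Bellman-operator residual evaluated at $J_k$. If one proceeds by a naive triangle inequality the effective contraction factor on $\|J_{k,\mu_k}-J_k^*\|$ degrades from $\alpha$ to $2\alpha$ and a further $1/(1-\alpha)$ factor is lost. Avoiding this requires using monotonicity (Assumption~\ref{a1}) to propagate the operator inequality $T_{k,\mu_{k+1}}J_k \leq T_kJ_k + \epsilon_{1,k}\nu$ through iterated applications of $T_{k,\mu_{k+1}}$ before passing to the fixed point, so that the $\delta_{1,k}$ term picks up exactly the claimed coefficient $2\alpha/(1-\alpha)$ and no more.
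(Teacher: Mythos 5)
Your proposal is correct and follows essentially the same route as the paper: the within-time-$k$ bound $\|J_{k,\mu_{k+1}}-J_k^*\|\leq \alpha_k\|J_{k,\mu_k}-J_k^*\|+(\epsilon_{1,k}+2\alpha_k\delta_{1,k})/(1-\alpha_k)$, which you sketch from first principles, is exactly what the paper imports by citing Proposition 2.4.4 of \cite{bertsekas2018abstract}, and the subsequent triangle-inequality step using (\ref{19}) and the recursion to (\ref{22}) coincide with the paper's argument.
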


\begin{proof}
For each $k\in\mathbb{N}$, in view of Proposition 2.4.4 in \cite{bertsekas2018abstract}, one can obtain that
\begin{align}
\|J_{k,\mu_{k+1}}-J_k^*\|\leq \alpha_k\|J_{k,\mu_k}-J_k^*\|+\frac{\epsilon_{1,k}+2\alpha_k\delta_{1,k}}{1-\alpha_k},      \nonumber
\end{align}
which implies that
\begin{align}
\|J_{k+1,\mu_{k+1}}-J_{k+1}^*\|&\leq \|J_{k,\mu_{k+1}}-J_k^*\|+\|J_k^*-J_{k+1}^*\|           \nonumber\\
&\hspace{0.4cm}+\|J_{k+1,\mu_{k+1}}-J_{k,\mu_{k+1}}\|                      \nonumber\\
&\leq \alpha_k\|J_{k,\mu_k}-J_k^*\|+\gamma_{1,k}+\gamma_{2,k}            \nonumber\\
&\hspace{0.4cm}+\frac{\epsilon_{1,k}+2\alpha_k\delta_{1,k}}{1-\alpha_k}      \nonumber\\
&\leq \alpha\|J_{k,\mu_k}-J_k^*\|+r_1,            \nonumber
\end{align}
where we have used (\ref{19}) in the second inequality and the facts $\alpha_k\leq\alpha,\gamma_{l,k}\leq\gamma_l$ for $k\in\mathbb{N},l=1,2$ in the last inequality. By recursively iterating the above inequality, the conclusion (\ref{22}) can be asserted.
\end{proof}

\subsection{Online Optimistic Policy Iteration}

In online PI, the online policy evaluation (\ref{18a}) requires to exactly resolve the fixed point of $T_{k,\mu_k}$, which is usually computationally prohibitive. To alleviate the computational burden, another algorithm, called {\em online optimistic PI (or online ``modified'' PI)}, aims to approximately solve the fixed point of $T_{k,\mu_k}$, delineated as for a given initial cost function $J_0\in\mathcal{B}(X)$
\begin{align}
T_{k,\mu_k}J_k=T_kJ_k,~~J_{k+1}=T_{k,\mu_k}^{m_k}J_k,            \label{23}
\end{align}
producing a sequence of $\{\mu_k\}$ and $\{J_k\}$, where $m_k\geq 1$ is an integer for iterating the mapping $T_{k,\mu_k}$ totally $m_k$ times dependent on the computation power at time step $k\in\mathbb{N}$. To analyze (\ref{23}), a metric to measure the consecutive difference between $T_k$ and $T_{k+1}$ is postulated as
\begin{align}
\|(T_k-T_{k+1})J\|\leq \eta_{1,k},~~\forall J\in\mathcal{B}(X)       \label{24}
\end{align}
for some constant $\eta_{1,k}\geq 0$ and for all $k\in\mathbb{N}$.

At this stage, it is helpful to introduce a preliminary result on the boundedness of $J_{k+1}$, which is an extension of Lemma 2.5.3 in \cite{bertsekas2018abstract} to the online case considered in this paper.

\begin{lemma}\label{l1}
Under Assumptions \ref{a1} and \ref{a2}, if $J_0\geq T_{0}J_0-c\nu$ for some $c\geq 0$, then for all $k\in\mathbb{N}$
\begin{align}
T_kJ_k+\frac{\alpha_k}{1-\alpha_k}\lambda_k(c)\nu &\geq J_{k+1}          \nonumber\\
&\geq T_{k+1}J_{k+1}-\lambda_{k+1}(c)\nu,         \label{25}
\end{align}
where $\lambda_k(c)$ is defined by
\begin{align}
\lambda_k(c)=\left\{
                \begin{array}{ll}
                  c, & if~k=0 \\
                  \sum_{s=0}^{k-1}\eta_{1,s}\prod_{l=s+1}^{k-1}\alpha_l^{m_l}+c\prod_{l=0}^{k-1}\alpha_l^{m_l}, & if~k\geq 1
                \end{array}
              \right.        \nonumber
\end{align}
with the convention $\prod_{l=s+1}^{k-1}\alpha_l^{m_l}=1$ when $s=k-1$.
\end{lemma}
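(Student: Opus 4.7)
The plan is to establish the result by induction on $k$, with the \emph{lower} bound $J_k \ge T_k J_k - \lambda_k(c)\nu$ serving as the inductive invariant; the \emph{upper} bound for $J_{k+1}$ will then fall out of the same one-step argument. The base case $k=0$ is exactly the hypothesis of the lemma together with $\lambda_0(c)=c$. The key algebraic observation driving the recursion is that the closed-form expression for $\lambda_k(c)$ satisfies $\lambda_{k+1}(c) = \alpha_k^{m_k}\lambda_k(c) + \eta_{1,k}$, which can be verified directly by splitting the sum in the definition at $s=k$.

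For the inductive step, suppose $T_kJ_k \le J_k + \lambda_k(c)\nu$. Since the policy-improvement step in (\ref{23}) enforces $T_{k,\mu_k}J_k = T_kJ_k$, this reads $T_{k,\mu_k}J_k \le J_k + \lambda_k(c)\nu$. Applying $T_{k,\mu_k}$ repeatedly and combining Assumption \ref{a1} (monotonicity) with the contraction in Assumption \ref{a2}, one shows inductively in $m$ that
\begin{align*}
T_{k,\mu_k}^{m+1}J_k \le T_{k,\mu_k}^{m}J_k + \alpha_k^{m}\lambda_k(c)\nu,\quad m\ge 0.
\end{align*}
Summing the resulting geometric progression up to $m=m_k-1$ and using $T_{k,\mu_k}J_k = T_kJ_k$ yields
\begin{align*}
J_{k+1} \;=\; T_{k,\mu_k}^{m_k}J_k \;\le\; T_kJ_k + \frac{\alpha_k}{1-\alpha_k}\lambda_k(c)\nu,
\end{align*}
which is precisely the left-hand inequality of (\ref{25}).

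For the right-hand inequality I would first use the consecutive-difference bound (\ref{24}) to compare $T_{k+1}J_{k+1}$ with $T_kJ_{k+1}$, giving $T_{k+1}J_{k+1} \le T_kJ_{k+1} + \eta_{1,k}\nu$. Since $T_kJ$ is the pointwise infimum of $T_{k,\mu}J$ over $\mu\in\mathcal{M}$, one has $T_kJ_{k+1} \le T_{k,\mu_k}J_{k+1} = T_{k,\mu_k}^{m_k+1}J_k$. Applying the same per-iteration bound above with $m=m_k$ gives $T_{k,\mu_k}^{m_k+1}J_k \le J_{k+1} + \alpha_k^{m_k}\lambda_k(c)\nu$, so altogether
\begin{align*}
T_{k+1}J_{k+1} \;\le\; J_{k+1} + \bigl(\alpha_k^{m_k}\lambda_k(c) + \eta_{1,k}\bigr)\nu.
\end{align*}
Invoking the recursion $\alpha_k^{m_k}\lambda_k(c)+\eta_{1,k} = \lambda_{k+1}(c)$ noted above converts this into $J_{k+1} \ge T_{k+1}J_{k+1} - \lambda_{k+1}(c)\nu$, closing the induction.

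I do not expect a single hard step; the main obstacle is careful bookkeeping, specifically coordinating the inner geometric-series bound that collapses the $m_k$ applications of $T_{k,\mu_k}$ with the outer induction across $k$ that accrues the $\eta_{1,k}$ increments from the time-varying mappings, and then verifying that the closed-form $\lambda_k(c)$ in the statement is exactly the solution of the one-step recursion with initial value $c$. Once that algebraic identification is in place, the proof is a direct extension of the stationary argument of Lemma~2.5.3 in \cite{bertsekas2018abstract}.
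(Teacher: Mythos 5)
Your proof is correct and takes essentially the same route as the paper: an induction on $k$ with the invariant $J_k \geq T_kJ_k-\lambda_k(c)\nu$, the per-iteration contraction/geometric-series bound on the $m_k$ applications of $T_{k,\mu_k}$ for the upper estimate, and the consecutive-difference bound (\ref{24}) to absorb the $\eta_{1,k}$ increment via the recursion $\lambda_{k+1}(c)=\alpha_k^{m_k}\lambda_k(c)+\eta_{1,k}$. The only difference is that you reprove inline the two auxiliary inequalities that the paper imports as Lemma 2.5.2 of \cite{bertsekas2018abstract}, which makes your argument self-contained but otherwise identical.
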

\begin{proof}
Because of $J_0\geq T_0 J_0-c\nu$, in view of Lemma 2.5.2 in \cite{bertsekas2018abstract} by letting $T=T_0,J=J_0,k=m_0$, and $\mu=\mu_0$, one has
\begin{align}
T_0J_0\geq T_{\mu_0}^{m_0}J_0-\frac{\alpha_0 c\nu}{1-\alpha_0}=J_1-\frac{\alpha_0\lambda_0(c)\nu}{1-\alpha_0},        \nonumber
\end{align}
and
\begin{align}
J_1&=T_{\mu_0}^{m_0}J_0\geq T_0(T_{\mu_0}^{m_0}J_0)-\alpha_0^{m_0}c\nu=T_0J_1-\alpha_0^{m_0}c\nu         \nonumber\\
&=T_1J_1+(T_0-T_1)J_1-\alpha_0^{m_0}c\nu                 \nonumber\\
&\geq T_1J_1-\eta_{1,0} \nu-\alpha_0^{m_0}c\nu           \nonumber\\
&=T_1J_1-\lambda_1(c)\nu,                       \nonumber
\end{align}
where (\ref{24}) has been employed in the second inequality. Therefore, (\ref{25}) holds when $k=0$.

By induction, it is assumed that (\ref{25}) holds for $k\geq 1$, and then one has $J_k\geq T_kJ_k-\lambda_k(c)\nu$, which in conjunction with Lemma 2.5.2 in \cite{bertsekas2018abstract} with $T=T_k,J=J_k,k=m_k$ and $\mu=\mu_k$ yields that
\begin{align}
T_kJ_k\geq T_{\mu_k}^{m_k}J_k-\frac{\alpha_k\lambda_k(c)\nu}{1-\alpha_k}=J_{k+1}-\frac{\alpha_k\lambda_k(c)\nu}{1-\alpha_k},       \nonumber
\end{align}
and
\begin{align}
J_{k+1}&=T_{\mu_k}^{m_k}J_k\geq T_k(T_{\mu_k}^{m_k}J_k)-\alpha_k^{m_k}\lambda_k(c)\nu          \nonumber\\
&=T_kJ_{k+1}-\alpha_k^{m_k}\lambda_k(c)\nu               \nonumber\\
&=T_{k+1}J_{k+1}+(T_k-T_{k+1})J_{k+1}-\alpha_k^{m_k}\lambda_k(c)\nu           \nonumber\\
&\geq T_{k+1}J_{k+1}-\eta_{1,k}\nu-\alpha_k^{m_k}\lambda_k(c)\nu               \nonumber\\
&=T_{k+1}J_{k+1}-\lambda_{k+1}(c)\nu,                            \nonumber
\end{align}
where the second inequality has leveraged (\ref{24}). This ends the proof.
\end{proof}

It is now ready to provide the error bounds on online optimistic PI (\ref{23}).
\begin{theorem}\label{t4}
Under Assumptions \ref{a1} and \ref{a2}, let $c\geq 0$ such that $J_0\geq T_0 J_0-c\nu$. Then for all $k\in\mathbb{N}$
\begin{align}
-\frac{\lambda_k(c)\nu}{1-\alpha_k}\leq J_k-J_k^*&\leq \alpha_0^k\|J_0-J_0^*\|\nu+\sum_{l=0}^{k-1}(J_l^*-J_{l+1}^*)                     \nonumber\\
&\hspace{0.4cm}+\sum_{l=1}^{k-1}(T_l^{k-l}-T_{l-1}^{k-l})J_l+e_k',               \label{26}
\end{align}
where $e_k':=\sum_{l=0}^{k-1}\frac{\alpha_l^{k-l}\lambda_l(c)\nu}{1-\alpha_l}$.
\end{theorem}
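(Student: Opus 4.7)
The plan is to establish the two-sided bound in two separate arguments, both leaning on Lemma~\ref{l1}, and then to recognize the upper-bound expression as the natural output of a carefully chosen telescoping decomposition.

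For the lower bound, I would first observe that the right-hand inequality of Lemma~\ref{l1}, together with the hypothesis $J_0 \geq T_0 J_0 - c\nu$ (which matches $\lambda_0(c) = c$), yields $J_k \geq T_k J_k - \lambda_k(c)\nu$ for every $k \in \mathbb{N}$. Iterating $T_k$ on both sides while repeatedly using monotonicity and the contraction-induced identity $T_k(J - a\nu) \geq T_k J - \alpha_k a\nu$ (Assumption~\ref{a2}) gives
\begin{align*}
J_k \geq T_k^m J_k - \lambda_k(c)\nu \sum_{j=0}^{m-1} \alpha_k^j, \quad m \geq 1.
\end{align*}
Letting $m \to \infty$ and invoking $T_k^m J_k \to J_k^*$ (which follows from $T_k$ being a contraction on $\mathcal{B}(X)$) produces $J_k - J_k^* \geq -\lambda_k(c)\nu/(1-\alpha_k)$.

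For the upper bound I would use the decomposition
\begin{align*}
J_k - J_k^* = (J_k - T_0^k J_0) + (T_0^k J_0 - J_0^*) + (J_0^* - J_k^*).
\end{align*}
The middle piece is at most $\alpha_0^k\|J_0 - J_0^*\|\nu$ by $k$-fold contraction of $T_0$ and $J_0^* = T_0 J_0^*$, and the last piece telescopes trivially to $\sum_{l=0}^{k-1}(J_l^* - J_{l+1}^*)$. To control $J_k - T_0^k J_0$, I would introduce the interpolating sequence $B_l := T_l^{k-l} J_l$, so that $B_0 = T_0^k J_0$ and $B_k = J_k$, and write
\begin{align*}
J_k - T_0^k J_0 = \sum_{l=0}^{k-1}\bigl[(T_{l+1}^{k-l-1} - T_l^{k-l-1})J_{l+1} + T_l^{k-l-1}(J_{l+1} - T_l J_l)\bigr].
\end{align*}
The residual piece is controlled by the left-hand inequality of Lemma~\ref{l1}, $J_{l+1} \leq T_l J_l + \alpha_l \lambda_l(c)\nu/(1-\alpha_l)$, pushed through $k-l-1$ applications of the monotonicity-plus-contraction bound $T_l(J + a\nu) \leq T_l J + \alpha_l a\nu$, yielding precisely the $\alpha_l^{k-l}\lambda_l(c)\nu/(1-\alpha_l)$ summand that assembles $e_k'$. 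A reindex $m = l+1$, together with the observation that the $m = k$ term vanishes because $T_k^0 = T_{k-1}^0 = I$, turns the leading operator-difference sum into $\sum_{l=1}^{k-1}(T_l^{k-l} - T_{l-1}^{k-l}) J_l$, exactly matching (\ref{26}).

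The main difficulty I anticipate is engineering the correct interpolant $B_l = T_l^{k-l} J_l$ and the particular split of $B_{l+1} - B_l$: neither step is forced by the raw output of Lemma~\ref{l1}, and the appearance of the operator difference $(T_l^{k-l} - T_{l-1}^{k-l}) J_l$ in the target bound essentially dictates the decomposition. Once the interpolation and split are fixed, the remaining estimates reduce to one-sided contraction pushes of Lemma~\ref{l1}'s inequalities and straightforward reindexing.
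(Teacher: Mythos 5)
Your proposal is correct and follows essentially the same route as the paper: the lower bound comes from Lemma~\ref{l1} plus iterating the contraction to the fixed point (the paper simply cites Lemma~2.5.1(b) of \cite{bertsekas2018abstract} for this), and your telescoping of $B_l=T_l^{k-l}J_l$ with the split into $(T_{l+1}^{k-l-1}-T_l^{k-l-1})J_{l+1}$ plus a residual controlled by the contraction push is exactly the paper's summation of $T_j^{k-j}J_j\geq T_j^{k-j-1}J_{j+1}-\alpha_j^{k-j}\lambda_j(c)\nu/(1-\alpha_j)$ followed by adding and subtracting $J_0^*$ and telescoping the optimal costs. Only a notational caution: since $T_l$ is not linear, write the residual as $T_l^{k-l-1}J_{l+1}-T_l^{k-l-1}(T_lJ_l)$ rather than $T_l^{k-l-1}(J_{l+1}-T_lJ_l)$; your subsequent estimate is exactly the bound for this difference, so the argument is unaffected.
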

\begin{proof}
The proof is motivated by Lemma 2.5.4 in \cite{bertsekas2018abstract}. In light of $J_0\geq T_0J_0-c\nu$ and Lemma \ref{l1}, it can be obtained that
\begin{align}
J_k\geq T_k J_k-\lambda_k(c)\nu,~~\forall k\in\mathbb{N}         \nonumber
\end{align}
which in conjunction with Lemma 2.5.1(b) in \cite{bertsekas2018abstract} with $W=T_k,J=J_k$ and $k=0$ follows that
\begin{align}
J_k\geq J_k^*-\frac{\lambda_k(c)\nu}{1-\alpha_k},           \nonumber
\end{align}
thus ending the proof of (\ref{26}) on the left-hand side.

Now, invoking Lemma \ref{t1}, one has that
\begin{align}
T_jJ_j\geq T_{j+1}-\frac{\alpha_j\lambda_j(c)\nu}{1-\alpha_j},~~j=0,1,\ldots,k-1         \nonumber
\end{align}
which, together with Proposition 2.1.3 in \cite{bertsekas2018abstract} with $T_\mu=T_j^{k-j-1}$, leads to that
\begin{align}
T_{j}^{k-j}J_j\geq T_j^{k-j-1}J_{j+1}-\frac{\alpha_j^{k-j}\lambda_j(c)\nu}{1-\alpha_j}.        \nonumber
\end{align}
Summing the above inequality over $j=0,1,\ldots,k-1$ gives rise to that
\begin{align}
T_0^kJ_0\geq J_k+\sum_{l=1}^{k-1}(T_{l-1}^{k-l}-T_l^{k-l})J_l-e_k',            \nonumber
\end{align}
which implies that
\begin{align}
&T_0^kJ_0-J_0^*+\sum_{l=0}^{k-1}(J_l^*-J_{l+1}^*)+J_k^*            \nonumber\\
&\hspace{0.4cm}\geq J_k+\sum_{l=1}^{k-1}(T_{l-1}^{k-l}-T_l^{k-l})J_l-e_k'.       \nonumber
\end{align}
Using $\|T_0^kJ_0-J_0^*\|\leq \alpha_0^k\|J_0-J_0^*\|$ in the above inequality can obtain the right-hand side inequality in (\ref{26}). This completes the proof.
\end{proof}

\begin{remark}
It can be observed in the right-hand side of (\ref{26}) that the differences among $H_k$ will incur a larger error bound than (approximate) online VI and PI due to the presence of $\sum_{l=0}^{k-1}(J_l^*-J_{l+1}^*)$ and $\sum_{l=1}^{k-1}(T_l^{k-l}-T_{l-1}^{k-l})J_l$, resulting in accumulative errors as $k$ advances. However, due to $\lambda_k(c)\leq\frac{\eta_1}{1-\alpha^{m_d}}+c\alpha^{\sum_{l=0}^{k-1}m_l}$, where $\eta_1:=\max_{k\in\mathbb{N}}\eta_{1,k}$ and $m_d:=\min_{k\in\mathbb{N}}m_k$, the online optimistic PI has a faster convergence rate than $\alpha^k$ from one side, i.e., the left-hand side of (\ref{26}), with rate $\alpha^{\sum_{l=0}^{k-1}m_l}$. This result is consistent with the case where $H_k$'s are time-invariant, see Section 2.5.1 in \cite{bertsekas2018abstract}.
\end{remark}

\subsection{Approximate Online Optimistic Policy Iteration}

In this subsection, it is desirable to consider the approximate algorithm for the online optimistic PI, where both operations in (\ref{23}) are approximate. To be specific, {\em approximate online optimistic PI} generates sequences $\mu_k$ and $J_k$ by
\begin{subequations}
\begin{align}
&\|T_{k,\mu_{k+1}}J_k-T_kJ_k\|\leq \epsilon_k,                                    \label{27a}\\
&\|J_k-T_{k-1,\mu_k}^{m_k}J_{k-1}\|\leq \delta_k,~~\forall k\in\mathbb{N}         \label{27b}
\end{align}\label{27}
\end{subequations}
where $\epsilon_k,\delta_k\geq 0$ are some constants. As previously done, it is of help to introduce the metric to measure how different two consecutive $H_k$ and $H_{k+1}$ are, that is, there are constants $\eta_{2,k},\eta_{3,k}\geq 0$ such that for all $k\in\mathbb{N}$ and any $J\in\mathcal{B}(X),\mu\in\mathcal{M}$, and for $j\in\{1,k+1\}$
\begin{align}
\|(T_{k,\mu}^j-T_{k+1,\mu}^j) J\|\leq \eta_{2,k},~~\|J_k^*-J_{k+1}^*\|\leq \eta_{3,k}.        \label{29}
\end{align}
For example, in Example \ref{e1}, the first inequality in (\ref{29}) when $j=1$ means $\|g_k(x,\mu)-g_{k+1}(x,\mu)\|\leq \eta_{2,k}$ for all $x\in X,\mu\in U(x)$.

It is known from the case where $H_k$'s are time-invariant \cite{bertsekas2018abstract} that a stronger assumption than Assumptions \ref{a1} and \ref{a2} is required, and thus it is also employed here for the online case.
\begin{assumption}[Semilinear Monotonic Contraction]\label{a3}
For all $k\in\mathbb{N}$, $J\in\mathcal{B}(X)$ and $\mu\in\mathcal{M}$, there holds $T_{k,\mu}J,T_{k}J\in \mathcal{B}(X)$. Moreover, there exists $\alpha_k\in(0,1)$ for each $k\in\mathbb{N}$ such that for all $J_1,J_2\in\mathcal{B}(X),\mu\in\mathcal{M}$
\begin{align}
M(T_{k,\mu}J_1-T_{k,\mu}J_2)\leq \alpha_k M(J_1-J_2),          \label{30}
\end{align}
where the mapping $M:\mathcal{B}(X)\to\mathbb{R}$ is defined as $M(y)=\sup_{x\in X}\frac{y(x)}{\nu(x)}$ for a function $y\in\mathcal{B}(X)$.
\end{assumption}

With the above at hand, we are now in a position to give the error bound for approximate online optimistic PI.
\begin{theorem}\label{t5}
Under Assumption \ref{a3}, the sequences $\{\mu_k\}$ generated by (\ref{27}) satisfy
\begin{align}
\|J_{\mu_k}-J_k^*\|&\leq \frac{\alpha^{\sum_{l=1}^km_l}}{1-\alpha}M(T_{1,\mu_1}J_0-J_0)           \nonumber\\
&\hspace{0.0cm}+\alpha^{k-1}M(T_{1,\mu_1}J_0-J_1^*)+\frac{c_1\beta^{\lceil\frac{k}{2}\rceil}}{1-\beta}+\frac{c_1\beta\alpha^{\lfloor\frac{k}{2}\rfloor}}{1-\alpha}        \nonumber\\
&\hspace{0.0cm}+\frac{\alpha^{m_k}\varepsilon_1}{(1-\alpha)(1-\alpha^{m_d})}+\frac{\varepsilon_2}{1-\alpha},           \label{31}
\end{align}
where $c_1:=\frac{\alpha-\alpha^{m_s}}{1-\alpha}M(T_{1,\mu_1}J_0-J_0)$, $m_s:=\max_{k\in\mathbb{N}}m_k$, $\beta:=\alpha^{m_d}$, $m_d:=\min_{k\in\mathbb{N}}m_k$, $\varepsilon_1:=\epsilon+(1+\alpha)\delta+(2+\alpha)\eta_2$, $\varepsilon_2:=\frac{(\alpha-\alpha^{m_s})\varepsilon_1}{(1-\alpha)(1-\alpha^{m_d})}+\epsilon+\eta_2+\eta_3+\alpha(\delta+\eta_2)$, $\epsilon:=\max_{k\in\mathbb{N}}\epsilon_k$, $\delta:=\max_{k\in\mathbb{N}}\delta_k$, $\alpha:=\max_{k\in\mathbb{N}}\alpha_k$, $\eta_l:=\max_{k\in\mathbb{N}}\eta_{l,k}$ for $l=2,3$, and $\lfloor d\rfloor,\lceil d\rceil$ mean the largest integer not greater than $d$ and smallest integer not less than $d$ for a real number $d$, respectively.
\end{theorem}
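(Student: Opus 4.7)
The plan is to mirror the approximate optimistic PI analysis for the stationary case (Proposition 2.5.5 in \cite{bertsekas2018abstract}) while carefully accounting for the drift between $H_k$ and $H_{k+1}$ through the quantities $\eta_{2,k}$ and $\eta_{3,k}$. Assumption \ref{a3} is the right tool because the one-sided operator $M(\cdot)$ enjoys subadditivity and a contraction-like inequality, which is exactly what makes the stationary proof go through despite using only suprema (rather than absolute values) of differences.

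First I would derive a one-step recursion on the auxiliary quantity $v_k := M(T_{k,\mu_k}J_{k-1} - J_{k-1})$. Starting from (\ref{27a})--(\ref{27b}) and inserting $T_{k,\mu_{k+1}}J_k$, I split
\begin{align*}
T_{k+1,\mu_{k+1}}J_k - J_k &= (T_{k+1,\mu_{k+1}} - T_{k,\mu_{k+1}})J_k \\
&\quad + (T_{k,\mu_{k+1}}J_k - T_kJ_k) + (T_kJ_k - J_k),
\end{align*}
so that applying $M(\cdot)$ and (\ref{29}) introduces $\eta_{2,k}$ and $\epsilon_k$, while the last piece is controlled by (\ref{27b}) and (\ref{30}) applied $m_k$ times to get a factor $\alpha^{m_k}$ multiplying $v_k$, plus a $\delta$-type residual. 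Cleaning up yields a recursion roughly of the form $v_{k+1} \le \alpha^{m_k}v_k + \varepsilon_1'$ where $\varepsilon_1'$ is an affine function of $\epsilon, \delta, \eta_2$; unrolling and using $m_k \ge m_d$ produces the $\alpha^{\sum_{l=1}^{k}m_l}$ decay and the $\frac{\alpha^{m_k}\varepsilon_1}{(1-\alpha)(1-\alpha^{m_d})}$ tail.

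Next I would translate the bound on $v_k$ into a bound on $\|J_{\mu_k}-J_k^*\|$. Since $J_{\mu_k}$ is the fixed point of $T_{k,\mu_k}$, Assumption \ref{a3} gives $M(J_{\mu_k} - J_k) \le \frac{1}{1-\alpha}M(T_{k,\mu_k}J_k - J_k)$, and the opposite direction $M(J_k - J_{\mu_k})$ follows by the same device applied to $T_{k,\mu_k}^{m_k}$. Then $\|J_k-J_k^*\|$ is split as $\|J_k - T_{k,\mu_k}J_{k-1}^{\text{shift}}\|$ plus errors relating $J_k^*$ to $J_{k-1}^*$ via $\eta_{3,k}$, and the two resulting contributions account for $M(T_{1,\mu_1}J_0 - J_1^*)$ and the $\varepsilon_2/(1-\alpha)$ term. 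The split $\lceil k/2\rceil + \lfloor k/2\rfloor = k$ shows up when one bounds a sum of the form $\sum_{l=1}^{k} \alpha^{k-l}\beta^{l}\, c_1$ (produced by the competition between the inner-loop contraction $\beta=\alpha^{m_d}$ and the outer contraction $\alpha$); balancing by cutting the sum at $l = \lfloor k/2\rfloor$ and bounding each half by its dominant factor gives precisely the $\frac{c_1\beta^{\lceil k/2\rceil}}{1-\beta} + \frac{c_1\beta\alpha^{\lfloor k/2\rfloor}}{1-\alpha}$ pair.

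The main obstacle I expect is the bookkeeping in Step 1: because (\ref{27b}) compares $J_k$ to $T_{k-1,\mu_k}^{m_k}J_{k-1}$ (with mapping index $k-1$ but policy index $k$), while the recursion naturally wants $T_{k,\mu_k}^{m_k}J_{k-1}$, I will need to absorb a telescoping term $\|T_{k,\mu_k}^{m_k}J_{k-1} - T_{k-1,\mu_k}^{m_k}J_{k-1}\|$, which is exactly why the assumption (\ref{29}) is phrased for arbitrary $j \in \{1,k+1\}$. Controlling this mismatch uniformly in $k$ through $\eta_2$, and checking that the resulting constants collapse into the advertised $\varepsilon_1, \varepsilon_2$, is where the proof gets delicate; everything else is iteration of geometric inequalities and application of Lemma \ref{l0}-style summation to produce the stated bound.
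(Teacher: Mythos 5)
Your plan reproduces the paper's own argument: a recursion on the residual $M(T_{k,\mu_k}J_{k-1}-J_{k-1})$ contracted by $\alpha^{m_k}$ with $\epsilon,\delta,\eta_2$ drift terms, a fixed-point comparison (the analogue of Proposition 2.1.4(b)) to pass from that residual to $J_{\mu_k}$, a second recursion with $\eta_3$ drift for the term involving $J_k^*$, and the split of the mixed sum $\sum_l \alpha^l\beta^{k-l-1}$ at $\lfloor k/2\rfloor$ to get the $\beta^{\lceil k/2\rceil}/(1-\beta)+\beta\alpha^{\lfloor k/2\rfloor}/(1-\alpha)$ pair, with the index mismatch in (\ref{27b}) absorbed through $\eta_2$ exactly as in the paper's appendix. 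This is essentially the same approach (the paper adapts Proposition 2.5.3 of \cite{bertsekas2018abstract}), and the sketch is correct in outline.
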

\begin{proof}
This proof is adapted from Proposition 2.5.3 in \cite{bertsekas2018abstract}, which is given in the Appendix for the completeness.
\end{proof}

\begin{remark}
From (\ref{29}), it can be easily verified that the error bound on $\|J_{k,\mu_k}-J_k^*\|$ in the asymptotic sense is given as
\begin{align*}
\mathop{\lim\sup}_{k\to\infty}\|J_{k,\mu_k}-J_k^*\|\leq \frac{\hat{\alpha}\varepsilon_1}{(1-\alpha)(1-\alpha^{m_d})}+\frac{\varepsilon_2}{1-\alpha},
\end{align*}
where $\hat{\alpha}:=\alpha^{\mathop{\lim\inf}_{k\to\infty}m_k}$.
\end{remark}

\section{Asynchronous Algorithms}\label{s5}

This section aims at further alleviating the computational complexity by taking into account asynchronous algorithms.

\subsection{Asynchronous Approximate Online Value Iteration}

Consider that there are $N$ processors for solving online abstract DP, and partition the state set $X$ into $N$ disjoint nonempty subsets $X_1,\ldots,X_N$. Correspondingly, let us partition $J$ as $J=(J_1,\ldots,J_N)$, where $J_l$ is the restriction of $J$ on $X_l$ for $l\in[N]$ with the notation $[N]:=\{1,\ldots,N\}$. Let $\mathcal{T}_l$ be a subset of iterations, denoting the updating or activation of processor $l\in[N]$. Then the {\em asynchronous approximate online VI} is given as
{\small\begin{align}
J_{l,k+1}(x)=\left\{
               \begin{array}{ll}
                 \tilde{T}_k^{m_k}(J_{1,\tau_{l1}(k)},\cdots,J_{N,\tau_{lN}(k)})(x), & k\in\mathcal{T}_l,x\in X_l \\
                 J_{l,k}(x), & k\notin\mathcal{T}_l,x\in X_l
               \end{array}
             \right.              \label{32}
\end{align}}
where $\tau_{li}(k)\in\{0,1,\ldots,k\}$ with $k-\tau_{li}(k)$ being the communication delay from processor $i\in[N]$ to processor $l$.

In the online case, some conditions on updating frequency and communication delays are listed below.
\begin{assumption}[Continuous Updating and Uniformly Bounded Delay]\label{a4}
~~~
\begin{enumerate}
  \item There exists an integer $T_a>0$ such that $\mathcal{T}_l\cap[k,k+T_a]\neq\emptyset$ for all $k\in\mathbb{N}$ and $l\in[N]$;
  \item There holds $|k-\tau_{ij}(k)|\leq T_d$ for some integer $T_d\geq 0$, for all $k\in\mathbb{N}$ and $i,j\in[N]$.
\end{enumerate}
\end{assumption}

The first condition in the above assumption means that each processor must update or activate at least once within consecutive $T_a$ time instants, and the second one indicates an upper bound on the communication delays.

With the above preparations, it is ready to develop the error bound on asynchronous approximate online VI.
\begin{theorem}\label{t6}
Under conditions (\ref{15})-(\ref{16}), Assumption \ref{a2} with $\nu(x)\equiv 1$ and Assumption \ref{a4}, the sequence $\{J_{k}\}$ generated by (\ref{32}) satisfies
\begin{align}
\limsup_{k\to\infty}\|J_k-J_k^*\|\leq \frac{\rho(T_a+\alpha^{m_d}T_d)+e}{1-\alpha^{m_d}},          \label{33}
\end{align}
where $\rho$ and $e$ are defined after (\ref{16}) and $m_d=\min_{k\in\mathbb{N}}m_k$.
\end{theorem}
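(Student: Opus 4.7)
My plan is to reduce Theorem \ref{t6} to an application of Lemma \ref{l0} on the scalar sequence $a_k := \|J_k - J_k^*\|$ (with $\nu\equiv 1$), by establishing a delay-type recursion of the form $a_k \leq b + \alpha^{m_d} a_{k-\delta_k}$ with $b = e + (T_a + \alpha^{m_d}T_d)\rho$ and $\delta_k \in \{1,\ldots,T_a+T_d\}$. Once this is in hand, Lemma \ref{l0} delivers (\ref{33}) immediately.

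To set up the recursion, fix $k$ and $l\in[N]$. By Assumption \ref{a4}(1), there exists $k_l \in \mathcal{T}_l$ with $k - T_a \leq k_l < k$; let $k_l$ be the most recent such activation time, so that on $X_l$ one has $J_{l,k} = J_{l,k_l+1} = \tilde{T}_{k_l}^{m_{k_l}}(J^{(l)})$, where $J^{(l)}$ denotes the composite function whose $i$-th component is $J_{i,\tau_{li}(k_l)}$. Assumption \ref{a4}(2) then forces $\tau_{li}(k_l) \in [k_l - T_d,\, k_l]$, so every index appearing in $J^{(l)}$ lies in the window $[k-T_a-T_d,\, k-1]$.

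The next step is the triangle-inequality split
\begin{align*}
|J_{l,k}(x) - J_{l,k}^*(x)| &\leq \|\tilde{T}_{k_l}^{m_{k_l}}J^{(l)} - T_{k_l}^{m_{k_l}}J^{(l)}\| \\
&\quad + \|T_{k_l}^{m_{k_l}}J^{(l)} - J_{k_l}^*\| + \|J_{k_l}^* - J_k^*\|
\end{align*}
for $x\in X_l$. The first term is bounded by $e$ via (\ref{15}). The third is controlled by telescoping (\ref{16}) across at most $T_a$ consecutive steps, giving $T_a \rho$. For the middle term, contraction (Assumption \ref{a2}) with $J_{k_l}^* = T_{k_l}^{m_{k_l}} J_{k_l}^*$ yields the factor $\alpha^{m_{k_l}} \leq \alpha^{m_d}$ multiplying $\|J^{(l)} - J_{k_l}^*\|$. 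I bound that residual component-wise: for each $i$, a further telescope of (\ref{16}) from $\tau_{li}(k_l)$ up to $k_l$ (a gap of at most $T_d$) gives
\begin{align*}
\|J_{i,\tau_{li}(k_l)} - J_{i,k_l}^*\|_{X_i} \leq a_{\tau_{li}(k_l)} + T_d\rho.
\end{align*}
Taking $\sup$ over $x\in X_l$ and then maxima over $l$ and $i$, the $T_d\rho$ summand gets multiplied by the contraction factor, producing exactly the asymmetric coefficient $(T_a + \alpha^{m_d} T_d)\rho$ and leaving $\alpha^{m_d}\cdot a_{j^\star}$ where $j^\star\in[k-T_a-T_d,\,k-1]$ is the index achieving the maximum. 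Setting $\delta_k := k - j^\star$ gives the desired recursion, after which Lemma \ref{l0} closes the argument.

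The main obstacle I anticipate is bookkeeping the two different $\rho$-terms so that the inner switching error ($T_d\rho$) is absorbed \emph{inside} the contraction and the outer one ($T_a\rho$) is not; a careless application of the triangle inequality gives the weaker bound with $(T_a+T_d)\rho$ in the numerator of (\ref{33}). A secondary subtlety is that $J^{(l)}$ is not a single previous iterate but an $l$-dependent composite of components from different past times, so the estimate must be set up component-wise before invoking the contraction of $T_{k_l}^{m_{k_l}}$ (which is valid on the full function space). Beyond this, the steps are routine triangle-inequality and telescoping manipulations.
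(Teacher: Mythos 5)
Your argument is correct and follows essentially the same route as the paper's proof: the same three-way triangle-inequality split (approximation error $e$ via (\ref{15}), contraction with modulus $\alpha^{m_{k_l}}\leq\alpha^{m_d}$, and telescoped optimal-cost drift via (\ref{16})), with the delay-induced $T_d\rho$ absorbed inside the contraction and the activation-gap $T_a\rho$ left outside, followed by the delayed recursion and an appeal to Lemma \ref{l0}. The only cosmetic difference is that you unify the paper's two cases ($k\in\mathcal{T}_l$ versus $k\notin\mathcal{T}_l$) by working directly from the most recent activation time.
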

\begin{proof}
Consider the time step $k+1$ and processor $l$. To simplify the notations, denote by $J_{\tau_l(k)}:=(J_{1,\tau_{l1}(k)},\cdots,J_{N,\tau_{lN}(k)})$. The analysis is divided into two cases: $k\in\mathcal{T}_l$ and $k\notin\mathcal{T}_l$.

If $k\in\mathcal{T}_l$, then one has
\begin{align}
&|J_{l,k+1}(x)-J_{k+1}^*(x)|           \nonumber\\
&=|\tilde{T}_k^{m_k}(J_{\tau_l(k)})(x)-J_{k+1}^*(x)|          \nonumber\\
&\leq |T_k^{m_k}(J_{\tau_l(k)})(x)-J_k^*(x)|+|J_k^*(x)-J_{k+1}^*(x)|             \nonumber\\
&\hspace{0.4cm}+|\tilde{T}_k^{m_k}(J_{\tau_l(k)})(x)-T_k^{m_k}(J_{\tau_l(k)})(x)|         \nonumber\\
&\leq \alpha^{m_k}\|J_{\tau_l(k)}-J_k^*\|+\rho+e                                     \nonumber\\
&\leq \alpha^{m_k}\big(\|J_{\tau_l(k)}-J_{\tau_l(k)}^*\|+\cdots+\|J_{k-1}^*-J_k^*\|\big)+\rho+e        \nonumber\\
&\leq \alpha^{m_k}\|J_{\tau_l(k)}-J_{\tau_l(k)}^*\|+\alpha^{m_k}T_d\rho+\rho+e,                \nonumber
\end{align}
where the second condition in Assumption \ref{a4} has been exploited to obtain the last inequality.

If $k\notin \mathcal{T}_l$, then there must exist an integer $t'\in[k+1-T_a,k)$ such that processor $l$ updates or activates at time slot $t'$. As a result, one can obtain that
\begin{align}
&|J_{l,k+1}(x)-J_{k+1}^*(x)|         \nonumber\\
&=|J_{l,k}(x)-J_{k+1}^*(x)|=\cdots=|J_{l,t'+1}(x)-J_{k+1}^*(x)|         \nonumber\\
&=|\tilde{T}_{t'}^{m_{t'}}J_{\tau_l(t')}(x)-J_{k+1}^*(x)|              \nonumber\\
&\leq |T_{t'}^{m_{t'}}J_{\tau_l(t')}(x)-J_{t'}^*(x)|+|J_{t'}^*(x)-J_{k+1}^*(x)|         \nonumber\\
&\hspace{0.4cm}+|\tilde{T}_{t'}^{m_{t'}}J_{\tau_l(t')}(x)-T_{t'}^{m_{t'}}J_{\tau_l(t')}(x)|         \nonumber\\
&\leq \alpha^{m_{t'}}\|J_{\tau_l(t')}-J_{t'}^*\|+\sum_{i=t'}^k|J_i^*(x)-J_{i+1}^*(x)|+e            \nonumber\\
&\leq \alpha^{m_{t'}}\|J_{\tau_l(t')}-J_{\tau_l(t')}^*\|+\alpha^{m_{t'}}T_d\rho+T_a\rho+e,            \nonumber
\end{align}
where the similar technique to the last step of the above inequality has been used to obtain the last inequality.

Combining the above two inequalities yields that
\begin{align}
\|J_k-J_k^*\|&\leq \alpha^{m_d}\|J_{k-\tau(k)}-J_{k-\tau(k)}^*\|       \nonumber\\
&\hspace{0.4cm}+\rho(T_a+\alpha^{m_d}T_d)+e,         \nonumber
\end{align}
where $\tau(k)\in\{1,\ldots,T_a+T_d\}$. Consequently, in view of Lemma \ref{l0}, the conclusion can be obtained.
\end{proof}

\begin{remark}
Note that in the case where $H_k$'s are time-invariant \cite{bertsekas2018abstract}, the asynchronous value iteration is anatomized under less conservative conditions than Assumption \ref{a4}, i.e., each set $\mathcal{T}_l$ is infinite for all $l\in[N]$ and $\lim_{k\to\infty}\tau_{li}(k)=\infty$ for all $l,i\in[N]$. However, the analysis under the aforementioned conditions is no longer available to the online case studied in this paper.
\end{remark}

\subsection{Asynchronous Online Policy Iteration}

This subsection is to study the asynchronous algorithms for online policy iteration. To do so, let us first review the case of $H_k$'s being time-invariant. It is known that the natural asynchronous version of optimistic PI is not reliable in general, having a possibility of oscillation, and thus two another asynchronous PI algorithms have been proposed in \cite{bertsekas2018abstract}, i.e., an optimistic asynchronous algorithm with randomization and a policy iteration with a uniform fixed point. Usually, the first algorithm has some restrictions, for instance, assuming totally finite policies. In contrast, the second one is more advantageous without such restriction. Hence, the second algorithm is only take into consideration for the online case in this subsection. The idea is to introduce new functions to eliminate the anomaly that $T_k$ and $T_{k,\mu}$ do not have identical fixed points.

To do so, it is necessary to introduce two additional functions
\begin{align}
V:X\to\mathbb{R}~~\text{and}~~Q:X\times U\to\mathbb{R},           \label{34}
\end{align}
referred to as a cost function and $Q$-factor as in the DP context, respectively. Meanwhile, for all $k\in\mathbb{N}$, define two functions $F_{k,\mu}(V,Q)$ and $MF_{k,\mu}(V,Q)$ as
\begin{align}
F_{k,\mu}(V,Q)(x,u)&:=H_k(x,u,\min\{V,Q_\mu\}),       \label{35}\\
MF_{k,\mu}(V,Q)(x)&:=\min_{u\in U(x)}F_{k,\mu}(V,Q)(x,u),        \label{36}
\end{align}
where $Q_\mu(x):=Q(x,\mu(x))$ for all $x\in X$.

Now, a new mapping $G_{k,\mu}$ is defined as
\begin{align}
G_{k,\mu}(V,Q):=(MF_{k,\mu}(V,Q),F_{k,\mu}(V,Q)).     \label{37}
\end{align}
and the norm is defined by
\begin{align}
\|(V,Q)\|:=\max\{\|V\|,\|Q\|\},                   \label{38}
\end{align}
where $\|V\|$ is the weighted sup-norm of $V$, and
\begin{align}
\|Q\|:=\sup_{x\in X,u\in U(x)}\frac{|Q(x,u)|}{\nu(x)}.            \label{39}
\end{align}
Some good properties have been shown for $G_{k,\mu}$ in Proposition 2.6.4 in \cite{bertsekas2018abstract}, that is, for each fixed $k\in\mathbb{N}$ under Assumption \ref{a2}, $G_{k,\mu}$ has a unique fixed point $(J_k^*,Q_k^*)$, in which $Q_k^*$ is defined as $Q_k^*(x,u)=H_k(x,u, J_k^*)$ for $x\in X,u\in U(x)$, and $G_{k,\mu}$ is contractive in the sense
\begin{align}
&\|G_{k,\mu}(V_1,Q_1)-G_{k,\mu}(V_2,Q_2)\|        \nonumber\\
&\hspace{3.0cm}\leq \alpha_k\|(V_1,Q_1)-(V_2,Q_2)\|.         \label{40}
\end{align}

As in the last subsection, let us consider $N$ processors and divide the set $X$ into $N$ parts as $X_1,\ldots,X_N$, each of which is assigned to a separate processor. Each processor $l\in[N]$ maintains $V_k(x)$, $Q_k(x,u)$, and $\mu_k(x)$ only for $x$ in its local set $X_l$, and enjoy disjoint activation or updating time set $\mathcal{T}_l$ and $\bar{\mathcal{T}}_l$ for all processors $l\in[N]$.

At this position, the {\em asynchronous online PI} is proposed as
\begin{enumerate}
  \item {\em Online local policy improvement:} If $k\in\mathcal{T}_l$, processor $l$ updates that for all $x\in X_l$
\begin{align}
V_{k+1}(x)&=MF_{k,\mu_k}(V_k,Q_k)(x),          \nonumber\\
\mu_{k+1}(x)&=\mathop{\arg\min}_{u\in U(x)}H_k(x,u,\min\{V_k,Q_{k,\mu_k}\}),                                    \label{41}
\end{align}
and $Q_{k+1}(x,u)=Q_k(x,u)$ for all $x\in X_l,u\in U(x)$.
  \item {\em Online local policy evaluation:} If $k\in\bar{\mathcal{T}}_l$, processor $l$ updates for all $x\in X_l$ and $u\in U(x)$
\begin{align}
Q_{k+1}(x,u)=F_{k,\mu_k}(V_k,Q_k)(x,u),         \label{42}
\end{align}
and $V_{k+1}(x)=V_k(x)$, $\mu_{k+1}(x)=\mu_k(x)$ for all $x\in X_l$.
\end{enumerate}

To proceed, the following assumptions are of help for the subsequent analysis.

\begin{assumption}[Bounds on Consecutive Optimal Costs and Updating Frequency]\label{a5}
~~~
\begin{enumerate}
  \item There exists a constant $\bar{\rho}_k$ such that $\|(J_k^*,Q_k^*)-(J_{k+1}^*,Q_{k+1}^*)\|\leq \bar{\rho}_k$;
  \item There exists an integer $T_a>0$ such that $\mathcal{T}_l\cap [k,k+T_a]\neq \emptyset$ and $\bar{\mathcal{T}}_l\cap[k,k+T_a]\neq \emptyset$ for all $k\in\mathbb{N}$ and  $l\in[N]$.
\end{enumerate}
\end{assumption}

At present, it is ready to establish the following error bound result.
\begin{theorem}\label{t7}
Under Assumptions \ref{a2} and \ref{a5}, for the sequence $\{(V_k,Q_k)\}$ generated by asynchronous online PI (\ref{41})-(\ref{42}), there holds that
\begin{align}
\limsup_{k\to\infty}\|(V_k,Q_k)-(J_k^*,Q_k^*)\|\leq \frac{\bar{\rho}T_a}{1-\alpha},          \label{43}
\end{align}
where $\bar{\rho}:=\max_{k\in\mathbb{N}}\bar{\rho}_k$ and $\alpha:=\max_{k\in\mathbb{N}}\alpha_k$.
\end{theorem}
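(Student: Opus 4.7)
The plan is to establish a delayed recursion of the form
\[
a_{k+1}\le \alpha\,a_{k+1-\delta_{k+1}}+T_a\bar\rho,\qquad \delta_{k+1}\in\{1,\dots,T_a\},
\]
for the error sequence $a_k:=\|(V_k,Q_k)-(J_k^*,Q_k^*)\|$, and then to invoke Lemma \ref{l0} with $\tau=\alpha$, $b=T_a\bar\rho$, $K=T_a$ to obtain (\ref{43}). The driving algebraic facts are the contraction estimate (\ref{40}) for $G_{k,\mu}$ together with the property (noted after (\ref{40}) and proved in Proposition 2.6.4 of \cite{bertsekas2018abstract}) that $(J_k^*,Q_k^*)$ is a fixed point of $G_{k,\mu}$ for \emph{every} $\mu\in\mathcal{M}$, not just for the optimal one.

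First, I would freeze-and-unroll the iteration. Fix $k$ and $x\in X$, and let $l(x)$ denote the processor that owns $x$. Define
\[
t_V(x):=\max\{s\le k:\,s\in\mathcal T_{l(x)}\},\quad t_Q(x):=\max\{s\le k:\,s\in\bar{\mathcal T}_{l(x)}\}.
\]
By Assumption \ref{a5}(2), both lie in $[k+1-T_a,k]$. Since (\ref{41}) leaves the $V$ and $\mu$ components at $x$ untouched at times outside $\mathcal T_{l(x)}$, and (\ref{42}) leaves the $Q$ component untouched outside $\bar{\mathcal T}_{l(x)}$, one obtains
\[
V_{k+1}(x)=MF_{t_V(x),\mu_{t_V(x)}}(V_{t_V(x)},Q_{t_V(x)})(x),
\]
\[
Q_{k+1}(x,u)=F_{t_Q(x),\mu_{t_Q(x)}}(V_{t_Q(x)},Q_{t_Q(x)})(x,u).
\]
Crucially the policies appearing here are $\mu_{t_V(x)}$ and $\mu_{t_Q(x)}$, which need not coincide with $\mu_k$; but this is harmless because the universal fixed-point property above lets us subtract $(J_{t_V(x)}^*,Q_{t_V(x)}^*)$ from the right-hand side using \emph{any} $\mu$.

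Next, applying the contraction (\ref{40}) of $G_{t_V(x),\mu_{t_V(x)}}$ at its fixed point $(J_{t_V(x)}^*,Q_{t_V(x)}^*)$, and reading off the $V$-coordinate in the max-norm (\ref{38}), yields
\[
|V_{k+1}(x)-J_{t_V(x)}^*(x)|\le \alpha\,a_{t_V(x)}\,\nu(x),
\]
with the analogous estimate for $Q_{k+1}(x,u)-Q_{t_Q(x)}^*(x,u)$. To bridge to $(J_{k+1}^*,Q_{k+1}^*)$, I would telescope through Assumption \ref{a5}(1):
\[
\|J_{t_V(x)}^*-J_{k+1}^*\|\le \sum_{s=t_V(x)}^{k}\bar\rho_s\le T_a\bar\rho,
\]
and likewise for the $Q$ part. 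Dividing both inequalities by $\nu(x)$, taking the supremum over $x\in X$ (and over $u\in U(x)$ for the $Q$-component), and observing that only finitely many distinct update times $\{t_V(x)\}\cup\{t_Q(x)\}$ appear, all within $[k+1-T_a,k]$, gives the key recursion
\[
a_{k+1}\le \alpha\max_{s\in[k+1-T_a,\,k]}a_s + T_a\bar\rho.
\]
Choosing $\delta_{k+1}\in\{1,\dots,T_a\}$ as $k+1$ minus an argmax $s^\star$ puts this in the hypothesis of Lemma \ref{l0}, and the conclusion (\ref{43}) follows.

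The main obstacle I anticipate is the bookkeeping in the freeze-and-unroll step: the $V$, $\mu$ and $Q$ components at $x$ are each carried over from possibly different past update times, and the arguments $V_{t_V(x)},Q_{t_V(x)}$ are themselves global objects whose individual coordinates were frozen at still earlier instants. What saves the argument is that we only need $(V_{t_V(x)},Q_{t_V(x)})$ as a whole to appear inside a single application of $G_{t_V(x),\mu_{t_V(x)}}$, so the interior inhomogeneity of the global vectors does not propagate into the contraction estimate; and it saves us from tracking $\mu_k$ explicitly, since $(J_s^*,Q_s^*)$ is a fixed point of $G_{s,\mu}$ for every $\mu$. Once this observation is in place, the rest of the proof is a direct combination of (\ref{40}), Assumption \ref{a5}, and Lemma \ref{l0}.
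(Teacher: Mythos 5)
Your proposal is correct and follows essentially the same route as the paper's proof: unroll each component to its most recent update time (which lies within the last $T_a$ steps by Assumption \ref{a5}), apply the contraction (\ref{40}) of $G_{s,\mu}$ at its policy-independent fixed point $(J_s^*,Q_s^*)$, telescope the optimal-pair drift via Assumption \ref{a5}(1) to collect the $T_a\bar\rho$ term, and close with Lemma \ref{l0}. Your treatment is in fact slightly more careful than the paper's in tracking that the last update times depend on the state (processor) and in making explicit the fact that the fixed point of $G_{s,\mu}$ does not depend on $\mu$, both of which the paper uses only implicitly.
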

\begin{proof}
For any $k>0$, based on Assumption \ref{a5}(1), there must exist two constants $t_1,t_2\in [k+1-T_a,k]$ such that processor $l\in[N]$ performs online local policy improvement and evaluation, respectively. Therefore, it can be concluded that for all $x\in X,u\in U(x)$
\begin{align}
|V_{k+1}(x)-&J_{k+1}^*(x)|=|MF_{t_1,\mu_{t_1}}(V_{t_1},Q_{t_1})(x)-J_{k+1}^*(x)|            \nonumber\\
&\leq |MF_{t_1,\mu_{t_1}}(V_{t_1},Q_{t_1})(x)-J_{t_1}^*(x)|            \nonumber\\
&\hspace{0.4cm}+\sum_{i=t_1}^k|J_i^*(x)-J_{i+1}^*(x)|                       \nonumber\\
&\leq |MF_{t_1,\mu_{t_1}}(V_{t_1},Q_{t_1})(x)-J_{t_1}^*(x)|+\bar{\rho}T_a,              \nonumber
\end{align}
and
\begin{align}
&|Q_{k+1}(x,u)-Q_{k+1}^*(x,u)|          \nonumber\\
&=|F_{t_2,\mu_{t_2}}(V_{t_2},Q_{t_2})(x,u)-Q_{k+1}^*(x,u)|          \nonumber\\
&\leq |F_{t_2,\mu_{t_2}}(V_{t_2},Q_{t_2})(x,u)-Q_{t_2}^*(x,u)|       \nonumber\\
&\hspace{0.4cm}+\sum_{i=t_2}^k|Q_i^*(x,u)-Q_{i+1}^*(x,u)|        \nonumber\\
&\leq |F_{t_2,\mu_{t_2}}(V_{t_2},Q_{t_2})(x,u)-Q_{t_2}^*(x,u)|+\bar{\rho}T_a,
\end{align}
where Assumption \ref{a5} has been applied to obtain the last inequalities of the above two expressions.

As a consequence, it can be obtained that
\begin{align}
&\|(V_{k+1},Q_{k+1})-(J_{k+1}^*,Q_{k+1}^*)\|          \nonumber\\
&=\max\{\|V_{k+1}-V_{k+1}^*\|,\|Q_{k+1}-Q_{k+1}^*\|\}       \nonumber\\
&\leq \max\{\|MF_{t_1,\mu_{t_1}}(V_{t_1},Q_{t_1})-J_{t_1}^*\|,       \nonumber\\
&\hspace{1.3cm}\|F_{t_2,\mu_{t_2}}(V_{t_2},Q_{t_2})-Q_{t_2}^*\|\}+\bar{\rho}T_a       \nonumber\\
&\leq \max\{\|G_{t_1,\mu_{t_1}}(V_{t_1},Q_{t_1})-(J_{t_1}^*,Q_{t_1}^*)\|,       \nonumber\\
&\hspace{1.3cm}\|G_{t_2,\mu_{t_2}}(V_{t_2},Q_{t_2})-(J_{t_2}^*,Q_{t_2}^*)\|\}+\bar{\rho}T_a,       \nonumber
\end{align}
which implies that there must exist a constant $\tau\in[k-T_a,k-1]$ such that
\begin{align}
&\|(V_{k},Q_{k})-(J_{k}^*,Q_{k}^*)\|          \nonumber\\
&\leq \|G_{\tau,\mu_{\tau}}(V_{\tau},Q_{\tau})-(J_{\tau}^*,Q_{\tau}^*)\|+\bar{\rho}T_a       \nonumber\\
&\leq \alpha\|(V_\tau,Q_\tau)-(J_{\tau}^*,Q_\tau^*)\|+\bar{\rho}T_a.                  \nonumber
\end{align}
Invoking Lemma \ref{l0} to the above inequality gives rise to the desired conclusion (\ref{43}), which completes the proof.
\end{proof}

\begin{remark}
It should be pointed out that communication delays, approximate algorithms, and multiple iterations at single step can be similarly addressed for the asynchronous online PI as previously done in this paper.
\end{remark}

It can be observed that it is not necessary to evaluate $Q$ over the entire state space (its value at $\mu_k(x)$ is enough), since the goal is only to calculate $J_k^*$. Consequently, by letting $J_k(x):=Q_k(x,\mu_k(x))$ for all $x\in X$, iterations (\ref{41}) and (\ref{42}) in the asynchronous online PI can, respectively, reduce to
\begin{align}
J_{k+1}=V_{k+1}(x)&=\min_{u\in U(x)}H_k(x,u,\min\{V_k,J_{k}\}),              \nonumber\\
\mu_{k+1}(x)&=\mathop{\arg\min}_{u\in U(x)}H_k(x,u,\min\{V_k,J_{k}\}),        \label{44}\\
J_{k+1}(x,u)&=H_k(x,u,\min\{V_k,J_k\})(x,u).                              \label{45}
\end{align}

\section{Examples}\label{s6}

In Example \ref{e1}, an online optimal control problem has been introduced to illustrate the problem formulation for online abstract DP, where $H_k$ is defined by $H_k(x,u,J)=g_k(x,u)+\alpha J(f(x,u))$. It is straightforward to see that $H_k$ satisfies Assumption \ref{a1}, and given $\alpha\in (0,1)$ and the boundedness of $g_k$, Assumption \ref{a2} is also satisfied by $H_k$ with respect to standard unweighted sup-norm, i.e., $\nu\equiv 1$. As a result, the theoretical results in this paper can be applied to the problem in Example \ref{e1}.

\begin{example}[Online Finite-State Discounted MDPs]\label{e2}
As another example, consider online finite-state discounted MDPs, which involves a system $x_{k+1}=f(x_k,u_k,w_k),k\in\mathbb{N}$ with finite states, where $x_k\in X$ is the state, $u_k\in U$ is the control, and $w_k\in W$ is a random disturbance with $W$ being countable. Also, the state equation is given in terms of transition probabilities
\begin{align}
p_{xy}(u)=Prob(y=f(x,u,w)|x),          \label{46}
\end{align}
for all $x,y\in X$ and $u\in U(x)$. In the meantime, taking into account a cost function $g_k(x,u)$ at each time step $k\in\mathbb{N}$. Then the abstract DP mapping $H_k$ can be written as
\begin{align}
H_k(x,u,J)=\sum_{y\in X} p_{xy}(u)(g_k(x,u,y)+\alpha J(y)).          \label{47}
\end{align}
It is easy to verify that $H_k$ is monotone, thus satisfying Assumption \ref{a1}. Moreover, if $\alpha\in(0,1)$ and $g_k$ are bounded, then $H_k$ is also contractive with respective to the standard unweighted sup-norm.

As a consequence, the online algorithms in previous sections are applicable to this problem. For instance, asynchronous online PI can be leveraged in which case the function in (\ref{35}) can be explicitly written as
\begin{align}
F_{k,\mu}(V,Q)(x,u)&=\sum_{y\in X}p_{xy}(u)\Big(g_{k}(x,u,y)           \nonumber\\
&\hspace{0.4cm}+\alpha\min\{V(y),Q(y,\mu(y))\}\Big).           \label{48}
\end{align}
\end{example}

Basically, all those problems, which satisfy monotone and contractive assumptions in the stationary case, i.e., $H_k$'s being independent of time, will still meet the two assumptions in the online case.

\section{Conclusion}\label{s7}

This paper has studied the online abstract DP problems, where the abstract mappings are time-varying, leading to that the optimal costs and policies are time-varying as well. It is known that to accurately track time-varying optimal costs and polices is in general impossible in the online case, thus necessitating the investigation on this problem. In this paper, we have developed quite a few algorithms based on classical ones in the static case where $H_k$'s are independent of time, and the tracking error bounds have been provided for these online algorithms, including approximate online VI, online PI, approximate online PI, online optimistic PI, approximate online optimistic PI, and asynchronous online PI and VI algorithms. It has been shown that the largest difference between consecutive abstract mappings $H_k$ and $H_{k+1}$ for $k\in\mathbb{N}$ play a critical part in the tracking error bounds. This paper focuses on the contractive models, as a first step to investigate the abstract DP in the online case, and thereby the future directions can be placed on the online abstract DP with semicontractive and noncontractive models.

\section*{Acknowledgment}

The authors would like to thank Dr. Zhirong Qiu for his helpful suggestions on this paper.


\section*{Appendix}

\noindent{\em The Proof of Theorem \ref{t5}:}

Throughout this proof, for notation ease, let $T_{\mu_k}$ (resp. $J_{\mu_k}$) simply denote $T_{k,\mu_k}$ (resp. $J_{k,\mu_k}$) when having the same time $k$, where $J_{k,\mu}$ means the fixed point of $T_{k,\mu}$, and denote
\begin{align}
&\underline{J}=J_{k-1}, J=J_k, \mu=\mu_k, \overline{\mu}=\mu_{k+1}, m=m_k, \overline{m}=m_{k+1},         \nonumber\\
&J^*=J_k^*, \overline{J}^*=J_{k+1}^*, s=J_\mu-T_\mu^m\underline{J}, \overline{s}=J_{\overline{\mu}}-T_{\overline{\mu}}^{\overline{m}}J,      \nonumber\\
&t=T_\mu^m\underline{J}-J^*, \overline{t}=T_{\overline{\mu}}^{\overline{m}}J-\overline{J}^*, r=T_\mu\underline{J}-\underline{J}, \overline{r}=T_{\overline{\mu}}J-J.     \nonumber
\end{align}
Then, it is easy to see that
\begin{align}
J_\mu-J^*=s+t.       \nonumber
\end{align}
In what follows, let us develop the bounds on $M(r)$, $M(s)$, and $M(t)$.

First, consider $M(r)$. It can be obtained that
{\small\begin{align*}
\overline{r}&=T_\mu J-J=(T_{\overline{\mu}}J-T_\mu J)+(T_\mu J-J)          \\
&\leq (T_{\overline{\mu}}J-T_k J)+(T_\mu J-T_\mu(T_\mu^m \underline{J}))      \\
&\hspace{0.4cm}+(T_\mu^m\underline{J}-J)+(T_\mu^m(T_\mu\underline{J})-T_\mu^m\underline{J})        \\
&\leq (T_{k,\overline{\mu}}J-T_kJ)+(T_{\overline{\mu}} J-T_{k,\overline{\mu}}J)+\alpha M(J-T_\mu^m\underline{J})\nu         \\
&\hspace{0.4cm}+(T_{k-1,\mu}^m\underline{J}-J)+(T_\mu^m\underline{J}-T_{k-1,\mu}^m\underline{J})+\alpha^m M(T_\mu\underline{J}-\underline{J})\nu      \\
&\leq (\epsilon+\delta)\nu+2\eta_2\nu+\alpha^m M(r)\nu+\alpha M(J-T_{k-1,\mu}^m\underline{J})\nu           \\
&\hspace{0.4cm}+\alpha M(T_{k-1,\mu}^m\underline{J}-T_{\mu}^m\underline{J})\nu         \\
&\leq (\epsilon+\delta)\nu+(2\eta_2+\alpha\delta+\alpha\eta_2)\nu+\alpha^m M(r)\nu,
\end{align*}}
where (\ref{29}) has been utilized to obtain the last two inequalities, which implies that
\begin{align*}
M(\overline{r})\leq \alpha^m M(r)+\varepsilon_1.
\end{align*}
By defining $M_{r,k}:=M(r)$, one has $M_{r,k+1}=M(\overline{r})$, and thus, by recursively iterating the above inequality, it yields that
\begin{align}
M_{r,k}&\leq \alpha^{\sum_{l=1}^{k-1}m_l}M_{r,1}+\varepsilon_1\sum_{j=1}^{k-1}\alpha^{\sum_{l=j+1}^{k-1}m_l}           \nonumber\\
&\leq \alpha^{\sum_{l=1}^{k-1}m_l}M_{r,1}+\frac{\varepsilon_1}{1-\alpha^{m_d}},               \label{pf2}
\end{align}
with the convention $\alpha^{\sum_{l=k}^{k-1}m_l}=1$.

Now, consider the bound on $M(s)$. To do so, invoking Proposition 2.1.4(b) in \cite{bertsekas2018abstract} gives rise to
\begin{align*}
J_\mu\leq \underline{J}+\frac{T_\mu\underline{J}-\underline{J}}{1-\alpha_k}\leq \underline{J}+\frac{T_\mu\underline{J}-\underline{J}}{1-\alpha},
\end{align*}
which together Assumption \ref{a3} follows that
\begin{align*}
s&=J_\mu-T_\mu^m\underline{J}=T_\mu^m J_\mu-T_\mu^m\underline{J}\leq \alpha^m M(J_\mu-\underline{J})\nu          \\
&\leq \frac{\alpha^m}{1-\alpha}M(T_\mu\underline{J}-\underline{J})\nu            \\
&\leq \frac{\alpha^m}{1-\alpha}M(r)\nu,
\end{align*}
further implying that
\begin{align}
M(s)&\leq \frac{\alpha^m}{1-\alpha}M(r)            \nonumber\\
&\leq \frac{\alpha^{\sum_{l=1}^k m_l}}{1-\alpha}M_{r,1}+\frac{\varepsilon_1\alpha^m}{(1-\alpha)(1-\alpha^{m_d})},        \label{pf3}
\end{align}
where (\ref{pf2}) has been used in the last inequality.

In what follows, let us focus on the bound on $M(t)$. Some manipulations with (\ref{29}) lead to that
\begin{align*}
\overline{t}&=T_{\overline{\mu}}^{\overline{m}}J-J^*+J^*-\overline{J}^*           \\
&=(T_{\overline{\mu}}^{\overline{m}}J-T_{\overline{\mu}}^{\overline{m}-1}J)+\cdots+(T_{\overline{\mu}}^{2}J-T_{\overline{\mu}}J)               \\
&\hspace{0.4cm}+(T_{\overline{\mu}}J-T_k J)+(T_k J-T_k J^*)+(J^*-\overline{J}^*)              \\
&\leq (\alpha^{\overline{m}-1}+\cdots+\alpha)M(T_{\overline{\mu}}J-J)\nu+(T_{\overline{\mu}}J-T_{k,\overline{\mu}}J)        \\
&\hspace{0.4cm}+(T_{k,\overline{\mu}}J-T_k J)+(T_k J-T_k J^*)+(J^*-\overline{J}^*)          \\
&\leq \frac{\alpha-\alpha^{\overline{m}}}{1-\alpha}M(\overline{r})\nu+(\epsilon+\eta_2+\eta_3)\nu+(T_kJ-T_kJ^*).
\end{align*}

Take into account the term $T_kJ-T_kJ^*$ in the last inequality. In view of Assumption \ref{a3} and (\ref{29}), one can obtain that
\begin{align*}
T_kJ-T_kJ^*&\leq \alpha M(J-J^*)\nu             \\
&\leq \alpha [M(J-T_{k-1,\mu}^m \underline{J})+M(T_{k-1,\mu}^m \underline{J}-T_{\mu}^m \underline{J})        \\
&\hspace{0.4cm}+M(T_{\mu}^m \underline{J}-J^*)]\nu        \\
&\leq \alpha(\delta+\eta_2)\nu+\alpha M(t)\nu,
\end{align*}
which in conjunction with the above inequality results in that
\begin{align*}
\overline{t}&\leq \frac{\alpha-\alpha^{\overline{m}}}{1-\alpha}M(\overline{r})\nu+(\epsilon+\eta_2+\eta_3)\nu     \\
&\hspace{0.4cm}+\alpha(\delta+\eta_2)\nu+\alpha M(t)\nu.
\end{align*}
Hence, in light of (\ref{pf2}), it can be concluded that
\begin{align*}
M(\overline{t})&\leq \frac{\alpha-\alpha^{\overline{m}}}{1-\alpha}M(\overline{r})+\epsilon+\eta_2+\eta_3+\alpha(\delta+\eta_2)+\alpha M(t)         \\
&\leq c_1\beta^k+\varepsilon_2+\alpha M(t),
\end{align*}
which, after defining $M_{t,k}:=M(t)$, follows that
\begin{align*}
M_{t,k+1}\leq \alpha M_{t,k}+c_1\beta^k+\varepsilon_2.
\end{align*}
As a result, it is straightforward to verify that
\begin{align}
M_{t,k}\leq \alpha^{k-1}M_{t,1}+c_1\sum_{l=0}^{k-2}\alpha^l\beta^{k-l-1}+\frac{\varepsilon_2}{1-\alpha}.           \label{pf4}
\end{align}

Equipped with the above preparations, making use of (\ref{pf3})-(\ref{pf4}), one has that
\begin{align*}
M(J_{\mu_k}-J_k^*)&\leq M(s)+M(t)           \\
&\leq \frac{\alpha^{\sum_{l=1}^k m_l}}{1-\alpha}M_{r,1}+\frac{\varepsilon_1\alpha^{m_k}}{(1-\alpha)(1-\alpha^{m_d})}            \\
&\hspace{0.4cm}+\alpha^{k-1}M_{t,1}+c_1\sum_{l=0}^{k-2}\alpha^l\beta^{k-l-1}+\frac{\varepsilon_2}{1-\alpha},
\end{align*}
which, in combination with the fact that $J_{\mu_k}\geq J_k^*$ by Proposition 2.1.2 in \cite{bertsekas2018abstract}, follows that
\begin{align*}
\|J_{\mu_k}-J_k^*\|&\leq \frac{\alpha^{\sum_{l=1}^km_l}}{1-\alpha}M_{r,1}+\alpha^{k-1}M_{t,1}+\frac{\varepsilon_2}{1-\alpha}           \nonumber\\
&\hspace{0.4cm}+\frac{\alpha^{m_k}\varepsilon_1}{(1-\alpha)(1-\alpha^{m_d})}+c_1\sum_{l=0}^{k-2}\alpha^l\beta^{k-l-1}.
\end{align*}

In the last inequality, the term $\sum_{l=0}^{k-2}\alpha^l\beta^{k-l-1}$ can be analyzed as
\begin{align*}
\sum_{l=0}^{k-2}\alpha^l\beta^{k-l-1}&=\big(\alpha^0\beta^{k-1}+\alpha\beta^{k-2}+\cdots+\alpha^{\lfloor\frac{k}{2}\rfloor-1}\beta^{k-\lfloor\frac{k}{2}\rfloor}\big)      \\
&\hspace{0.4cm}+\big(\alpha^{\lfloor\frac{k}{2}\rfloor}\beta^{k-1-\lfloor\frac{k}{2}\rfloor}+\cdots+\alpha^{k-2}\beta\big)           \\
&\leq \frac{\beta^{k-\lfloor\frac{k}{2}\rfloor}}{1-\beta}+\frac{\beta\alpha^{\lfloor\frac{k}{2}\rfloor}}{1-\alpha}         \\
&= \frac{\beta^{\lceil\frac{k}{2}\rceil}}{1-\beta}+\frac{\beta\alpha^{\lfloor\frac{k}{2}\rfloor}}{1-\alpha},
\end{align*}
where the last equality has employed the fact that $k=\lfloor\frac{k}{2}\rfloor+\lceil\frac{k}{2}\rceil$, by substituting which into the last inequality one can obtain the inequality (\ref{31}). This ends the proof.
\hfill\rule{2mm}{2mm}




\end{document}